\title{A Minkowski theorem for Meyer sets}
\date{\today}
\author{Pierre-Antoine Guih\'eneuf}
\address[Pierre-Antoine Guih\'eneuf]{Universidade Federal Fluminense, Instituto de Matemática e Estat\'istica, Rua Mário Santos Braga S/N, 24020-140 Niteroi, RJ, Brasil}
\email{pguiheneuf@id.uff.br}
\author{\'Emilien Joly}
\address[\'Emilien Joly]{Modal'X, Bureau E08, Bâtiment G,
Université Paris Ouest,
200 avenue de la République,
92000 Nanterre}
\email{emilien.joly@u-paris10.fr}
\newtheorem{lemme}{Lemma}
\newtheorem{theoreme}[lemme]{Theorem}
\newtheorem{prop}[lemme]{Proposition}
\newtheorem{coro}[lemme]{Corollary}
\newtheorem*{theorem}{Theorem}
\theoremstyle{definition}
\newtheorem{definition}[lemme]{Definition}
\theoremstyle{remark}
\newtheorem{rem}[lemme]{Remark}
\newtheorem{ex}[lemme]{Example}
\newcommand{\BS}{\overline{B}_R^S}
\newcommand{\N}{\mathbf{N}}
\newcommand{\R}{\mathbf{R}}
\newcommand{\Q}{\mathbf{Q}}
\newcommand{\Z}{\mathbf{Z}}
\newcommand{\varep}{\varepsilon}
\newcommand{\Leb}{\mathrm{Vol}}
\newcommand{\card}{\#}
\newcommand{\1}{\mathbf 1}
\newcommand{\DG}{\Delta\Gamma}
\begin{document}

\begin{abstract}
In this paper, we generalize Minkowski's theorem. This theorem is usually stated for a centrally symmetric convex body and a lattice both included in $\R^n$. In some situations, one may replace the lattice by a more general set for which a notion of density exists. In this paper, we prove a Minkowski theorem for Meyer sets, which bounds from below the frequency of differences appearing in the Meyer set and belonging to a centrally symmetric convex body. In the later part of the paper, we develop quite natural applications of this theorem to Diophantine approximation and to discretization of linear maps.
\end{abstract}

\subjclass[2010]{05A20, 11B05, 52C23,11H06}
\keywords{Minkowski theorem, almost periodicity}

\maketitle

\section*{Introduction}

Minkowski theorem states that if a convex subset of $\R^n$ is centrally symmetric with respect to 0 and has a big enough volume, then it contains a non-trivial point with integer coordinates, i.e. a point of $\Z^n$. This result was proved by H. Minkowski in 1889, and initiated a whole field, now called \emph{geometry of numbers} (see for example the books \cite{minkowski1910geometrie}, \cite{MR893813}, \cite{MR1020761} or \cite{MR1434478}). Since then, this theorem has led to many applications in various fields such as algebraic number theory, Diophantine approximation, harmonic analysis or complexity theory. 

The goal of the present paper is to state a Minkowski theorem in the more general context where the lattice $\Z^n$ is replaced by a Meyer subset of $\R^n$. A set $\Gamma\subset\R^n$ is \emph{Meyer} if both $\Gamma$ and its set of differences $\Delta\Gamma$ are uniformly discrete and relatively dense (see Definition~\ref{MeyerSet}). In particular, this definition implies that the uniform upper density $D^+(\Gamma)$ of $\Gamma$ (see Definition~\ref{uud}) is positive and finite. Given a Meyer set $\Gamma\subset \R^n$ and a centrally symmetric convex body $S$, it is always possible to remove a finite subset from $\Gamma$ such that the resulting set $\Gamma'$ is still Meyer and satisfies $\Gamma'\cap S = \emptyset$. Therefore, one cannot hope to get a meaningful statement of Minkowski theorem involving only the number of points in $S\cap\Gamma$ and $D^+(\Gamma)$. The solution is to average upon the whole Meyer set, and to introduce the so-called \emph{frequency of differences}. The frequency of the difference $u\in\Z^n$ is defined as the density $\rho_\Gamma(u)$ of the set $\Gamma\cap(\Gamma -u)$ over the density of $\Gamma$ (Definition~\ref{DefDiff}). Again, the fact that the set $\Gamma$ is Meyer is important here: it implies that the support of $\rho_{\Gamma}$ is uniformly discrete. The main result of this paper is the following (Theorem~\ref{MinkAlm}).

\begin{theorem}
Let $\Gamma\subset \R^n$ be a Meyer set, and $S\subset\R^n$ be a centrally symmetric convex body. Then
\[\sum_{u\in S\cap\Z^n} \rho_\Gamma(u) \ge D^+(\Gamma) \Leb(S/2).\]
\end{theorem}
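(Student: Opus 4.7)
The plan is to adapt the classical Minkowski--Blichfeldt argument, replacing its pigeonhole step (which in the lattice case exploits compactness of $\R^n/\Lambda$) by a Cauchy--Schwarz inequality applied to a multiplicity function attached to $\Gamma$. The Meyer hypothesis will enter crucially: uniform discreteness of $\Delta\Gamma$ guarantees that $S\cap\Delta\Gamma$ is finite, so the sum on the left-hand side is a well-defined finite quantity, and we will be able to interchange a limit and a sum at the end of the argument.

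For each large radius $R>0$, I would introduce the multiplicity function
\[f_R(z)\;=\;\sum_{x\in\Gamma\cap B_R}\1_{x+S/2}(z),\]
which satisfies $\int f_R=\#(\Gamma\cap B_R)\,\Leb(S/2)$ and is supported in $B_R+S/2$, a set whose volume is $\Leb(B_R)(1+o_R(1))$ since $S$ is bounded. The Cauchy--Schwarz inequality then yields
\[\bigl(\#(\Gamma\cap B_R)\,\Leb(S/2)\bigr)^{2}\;\le\;\Leb(B_R+S/2)\cdot\int f_R^{\,2}.\]
Expanding the square and observing that $\Leb\bigl((x+S/2)\cap(y+S/2)\bigr)$ depends only on $u=y-x$, is bounded by $\Leb(S/2)$, and vanishes unless $u\in S/2-S/2=S$ (by central symmetry and convexity of $S$), I obtain
\[\int f_R^{\,2}\;\le\;\Leb(S/2)\sum_{u\in S\cap\Delta\Gamma}N_R(u),\]
where $N_R(u)=\#\{(x,y)\in(\Gamma\cap B_R)^{2}:y-x=u\}$; this count agrees with $\#\bigl(\Gamma\cap(\Gamma-u)\cap B_R\bigr)$ up to a boundary correction of order $O(R^{n-1})$ caused by pairs with $x\in B_R$ but $x+u\notin B_R$.

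Combining the two displays, dividing by $\#(\Gamma\cap B_R)\,\Leb(B_R+S/2)$, and passing to the limit along a sequence $R_n\to\infty$ realizing the uniform upper density $D^+(\Gamma)$ should yield
\[\sum_{u\in S\cap\Delta\Gamma}\rho_\Gamma(u)\;\ge\;D^+(\Gamma)\,\Leb(S/2),\]
the interchange of limit and sum being legitimate precisely because $S\cap\Delta\Gamma$ is finite. The main technical point, which I expect to be the principal obstacle, is the density bookkeeping on this last step: one must verify that the $\limsup$ arising on the right can be compared term by term with the frequencies $\rho_\Gamma(u)$ on the left along the chosen subsequence, and that the boundary corrections relating $N_R(u)$ to $\#\bigl(\Gamma\cap(\Gamma-u)\cap B_R\bigr)$ are genuinely negligible once divided by $\#(\Gamma\cap B_R)\asymp R^{n}$. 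Both reduce to the uniform discreteness of $\Gamma$ and $\Delta\Gamma$ supplied by the Meyer assumption.
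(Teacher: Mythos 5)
Your argument is correct and is essentially the paper's own proof in different notation: the paper applies the convexity of $x\mapsto x^2$ (i.e.\ Cauchy--Schwarz) to the covering function $a\mapsto\card\big((S/2+a)\cap\Gamma\big)$, which is exactly your second-moment estimate for $f_R$, and its key inequality \eqref{eq:minkov_argu} is precisely your observation that $(x+S/2)\cap(y+S/2)\neq\emptyset$ forces $y-x\in S$ by convexity and central symmetry. The bookkeeping you flag does close, and more easily than you fear: no term-by-term convergence along the subsequence is needed, only the one-sided bounds $N_R(u)\le\card\big(\Gamma\cap(\Gamma-u)\cap B_R\big)$ and $\limsup_R\sum\le\sum\limsup_R$ over the finite set $S\cap\DG$, together with the (translation-invariant) precaution of centering the balls at points realizing the supremum in $D_R^+(\Gamma)$ rather than only at the origin.
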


This theorem brings a new insight to the classical Minkowski theorem. It shows that the object of interest is in fact the set of differences of elements in $\Gamma$ (which, for a lattice, is equal to $\Gamma$). In some sense, this point of view is already present in the original proof of Minkowski and the proof proposed in the sequel critically uses this fact. This is the purpose of Section \ref{sec:MinkThm}.

In Section \ref{sec:wapsets}, we define \emph{weakly almost periodic} sets (see Definition~\ref{wap}). For such sets, the uniform upper density and the frequency of differences are defined as limits (and no longer as upper limits). Roughly speaking, a set $\Gamma$ is weakly almost periodic if given any ball $B$ large enough, the intersection of $\Gamma$ with any translate $t(B)$ of $B$ is a translation of $B\cap \Gamma$ up to a proportion of points $\varep$ arbitrarily small. Such sets include a large class of quasicrystals, in particular model sets (and, of course, lattices).

The remaining part of the paper is dedicated to two applications of our main theorem.
We investigate Diophantine approximation in Section \ref{sec:app_dio}. A corollary is derived to show the existence of a couple of points in a quasicrystal for which the slope of the line defined by those points is arbitrarily close to a fixed, chosen slope. Another application is also considered: for any given irrational number $\alpha$ and any positive number $\varep$, the set $E_\alpha^\varep$ of integers $n$ such that $n\alpha$ is $\varep$-close to 0 is a weakly almost periodic set. Hence, estimates of the mean number of points in $E_\alpha^\varep$ which lie in the ``neighbourhood'' $[x-d,x+d]$ of a point $x\in E_\alpha^\varep$ can be given.

In Section \ref{sec:app_map}, a second application deals with discretizations of linear isometries. In particular, it shows that in most cases, it is impossible not to lose information while performing discrete rotations of numerical images with a naive algorithm.

\section{Definitions}

We begin with a few notations. The symmetric difference of two sets $A$ and $B$ will be denoted by $A \Delta B = (A\setminus B)\cup (B\setminus A)$. The notation $\Delta A$ will be used for the set of differences of $A$, defined as
\[\Delta A = A-A = \{a_1-a_2\mid a_1,a_2 \in A\}.\]
We will use $\card(A)$ for the cardinality of a set $A$, $\lambda$ for the Lebesgue measure on $\R^n$ and $\1$ for the indicator function. The number $\lceil x \rceil$ will denote the smallest integer bigger than $x$. For a set $A\subset \R^n$, we will denote by $\Leb (A)$ the volume of the set $A$. Finally, for any integer $n$, the number $\mu_n$ will refer to the volume of the unit ball of dimension $n$. We will often use the notation $\sum_{x\in A} f(x)$ with $A$ an uncountable set with no further justification; in this paper, every $f$ considered have a countable support.

\begin{definition}
Let $\Gamma$ be a subset of $\R^n$.
\begin{itemize}
\item We say that $\Gamma$ is \emph{relatively dense} if there exists $R_\Gamma>0$ such that each ball with radius at least $R_\Gamma$ contains at least one point of $\Gamma$.
\item We say that $\Gamma$ is \emph{uniformly discrete} if there exists $r_\Gamma>0$ such that each ball with radius at most $r_\Gamma$ contains at most one point of $\Gamma$.
\end{itemize}
The set $\Gamma$ is called a \emph{Delone set} if it is both relatively dense and uniformly discrete.
\end{definition}

\begin{definition}\label{uud}
For a discrete set $\Gamma\subset \R^n$ and $R\ge 1$, the \emph{uniform $R$-density} is:
\[D_R^+(\Gamma) = \sup_{x\in\R^n} \frac{\card\big(B(x,R)\cap \Gamma\big)}{\Leb\big(B(x,R)\big)},\]
and the \emph{uniform upper density} is:
\[D^+(\Gamma) = \underset{R\to +\infty}{\overline\lim} D_R^+(\Gamma).\]
\end{definition}

Remark that if $\Gamma\subset \R^n$ is a Delone set for the parameters $r_\Gamma$ and $R_\Gamma$, then its upper density satisfies:
\[\frac{1}{\mu_n R_{\Gamma}^n} \le D^+(\Gamma) \le \frac{1}{\mu_n r_{\Gamma}^n}.\]

\begin{definition}\label{MeyerSet}
A \emph{Meyer set} $\Gamma$ is a Delone set whose set of differences $\Delta\Gamma$ is also Delone.
\end{definition}

J.C.~Lagarias showed in \cite{MR1400744} that a Delone set is Meyer if and only if $\Delta\Gamma\subset \Gamma+F$ for some finite set $F$. A basic example of Meyer set is a relatively dense subset of a lattice\footnote{Recall that a \emph{lattice} of $\R^n$ is a discrete subgroup of $\R^n$ with finite upper density and which linearly spans $\R^n$.}. More generally, Y.~Meyer showed in \cite{MR0485769} that a Delone set is Meyer\footnote{For the equivalent definition given by the result of J.C.~Lagarias.} if and only if there exists a model set $\Lambda$ (see Definition~\ref{DefModel}) and a finite set $F$ such that $\Gamma\subset \Lambda + F$.
By definition, the set of differences of a Meyer set has finite upper density. The following definition quantifies the density of differences in the set $\Gamma$ and compares it to the density of $\Gamma$.

\begin{definition}\label{DefDiff}
For every $v\in\Z^n$, we set\index{$\rho_\Gamma$}
\[\rho_\Gamma(v) = \frac{D^+\{x\in\Gamma\mid x+v\in\Gamma\}}{D^+(\Gamma)} = \frac{D^+\big(\Gamma\cap(\Gamma-v)\big)}{D^+(\Gamma)} \in [0,1]\]
the \emph{frequency} of the difference $v$ in the Delone set $\Gamma$.
\end{definition}
Remark that when $\Gamma$ is a lattice, the set $\Gamma\cap(\Gamma-v)$ is either equal to $\Gamma$ (when $v\in \Gamma$), either empty (when $v \notin \Gamma$). Hence, $\rho_\Gamma(v)= \1_{v\in \Gamma}$ and for any subset $A$ of $\R^n$, $\sum_{v\in A} \rho_{\Gamma}(v)$ counts the number of elements of $\Gamma$ falling in $A$.

\begin{definition}\label{mine}
We say that the function $f$ admits a \emph{mean} $\mathcal M(f)$ if for every $\varep>0$, there exists $R_0>0$ such that for every $R\ge R_0$ and every $x\in\R^n$, we have
\[\left|\mathcal M(f) - \frac{1}{\Leb\big(B(x,R)\big)}\sum_{v\in B(x,R)} f(v) \right| <\varep.\]
\end{definition}

\section{A Minkowski theorem for Meyer sets}
\label{sec:MinkThm}

We now state a Minkowski theorem for the map $\rho_\Gamma$. To begin with, we recall the classical Minkowski theorem which is only valid for lattices (see for example IX.3 of \cite{MR2724440} or the whole books \cite{MR893813,MR1020761,MR1434478}).

\begin{theoreme}[Minkowski]\label{Minkowski}
Let $\Lambda$ be a lattice of $\R^n$, $k\in\N$ and $S\subset\R^n$ be a centrally symmetric convex body. If $\Leb(S/2) > k \operatorname{covol}(\Lambda)$, then $S$ contains at least $2k$ distinct points of $\Lambda\setminus\{0\}$.
\end{theoreme}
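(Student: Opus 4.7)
My plan is to deduce the statement from the classical Blichfeldt principle applied to the half-body $T=S/2$, and then promote the $k+1$ points supplied by Blichfeldt into $2k$ distinct nonzero lattice points through a short combinatorial observation about difference sets.

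First I would prove Blichfeldt's principle: if $T\subset\R^n$ is measurable with $\Leb(T)>k\operatorname{covol}(\Lambda)$, then there exist $k+1$ distinct points $t_0,\ldots,t_k\in T$ with $t_i-t_j\in\Lambda$ for all $i,j$. The argument is a pigeonhole in the quotient $\R^n/\Lambda$: fixing a measurable fundamental domain $F$, the multiplicity function $m(x)=\card\{\lambda\in\Lambda : x+\lambda\in T\}$ satisfies $\int_F m\,\ud x=\Leb(T)>k\Leb(F)$, so the set $\{m\geq k+1\}$ has positive measure, and any $x$ in it produces $k+1$ distinct translates $t_i=x+\lambda_i\in T$.

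Applying this with $T=S/2$, the hypothesis yields $k+1$ distinct points $t_0,\ldots,t_k\in S/2$ whose pairwise differences lie in $\Lambda$. Moreover, each $t_i-t_j$ lies in $S/2-S/2=S$, using convexity and central symmetry of $S$: for $t,t'\in S/2$, the point $t-t'=\frac{1}{2}(2t)+\frac{1}{2}(-2t')$ is a convex combination of points of $S$. So the whole difference set $\{t_i-t_j\}$ is contained in $\Lambda\cap S$.

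It remains to count distinct nonzero differences. For any $k+1$ distinct points $X=\{t_0,\ldots,t_k\}$ in $\R^n$, pick a linear functional $\ell$ separating them and reorder so that $\ell(t_0)<\cdots<\ell(t_k)$; then
\[\ell(t_k-t_0)>\ell(t_k-t_1)>\cdots>\ell(t_k-t_k)=0>\ell(t_{k-1}-t_k)>\cdots>\ell(t_0-t_k),\]
exhibiting $2k+1$ strictly decreasing values of $\ell$, hence $2k+1$ distinct elements of $X-X$ and $2k$ distinct nonzero ones. Combined with the previous paragraph, these give $2k$ distinct points of $\Lambda\setminus\{0\}$ inside $S$.

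The main subtlety is exactly this last counting step. The naive approach of taking the $k$ differences $t_i-t_0$ and appending their negatives $-(t_i-t_0)$ produces $2k$ candidate lattice points in $S$, but they need not be distinct: if $t_0$ happens to be the midpoint of $t_i$ and $t_j$ for some $i\neq j$, one has $t_i-t_0=-(t_j-t_0)$ and the count collapses. Using the full combinatorics of $X-X$ via the functional $\ell$, rather than fixing a single base point, cleanly sidesteps such collisions.
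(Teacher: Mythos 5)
Your proof is correct and follows essentially the same route as the paper's: the averaging/pigeonhole step (your Blichfeldt lemma is exactly the paper's observation that the periodic function $\sum_{\lambda\in\Lambda}\1_{\lambda+S/2}$ has mean $>k$ and hence attains a value $\ge k+1$), the passage from $k+1$ congruent points of $S/2$ to lattice differences in $S$ via convexity and central symmetry, and the final disambiguation of the $2k$ nonzero differences by an ordering argument (your separating functional $\ell$ plays the role of the paper's lexicographic sorting of the $\lambda_i$). No gaps; nothing further needed.
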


In particular, if $\Leb(S/2) > \operatorname{covol}(\Lambda)$, then $S$ contains at least one point of $\Lambda\setminus\{0\}$. This theorem is optimal in the following sense: for every lattice $\Lambda$, there exists a centrally symmetric convex body $S$ such that $\Leb(S/2) = k \operatorname{covol}(\Lambda)$ and that $S$ contains less than $2k$ distinct points of $\Lambda\setminus\{0\}$.

\begin{proof}[Proof of Theorem \ref{Minkowski}]
We consider the integer valued function
\[\varphi = \sum_{\lambda\in\Lambda} \1_{\lambda + S/2}.\]
The hypothesis about the covolume of $\Lambda$ and the volume of $S/2$ imply that the mean of the periodic function $\varphi$ satisfies $\mathcal M(\varphi)>k$. In particular, as $\varphi$ has integer values, there exists $x_0\in \R^n$ such that $\varphi(x_0)\ge k+1$ (note that this argument is similar to pigeonhole principle). So there exists $\lambda_0,\cdots,\lambda_k\in \Lambda$, with the $\lambda_i$ sorted in lexicographical order (for a chosen basis), such that the $x_0-\lambda_i$ all belong to $S/2$. As $S/2$ is centrally symmetric, as $\lambda_i-x_0$ belongs to $S/2$ and as $S/2$ is convex, $\big((x_0-\lambda_0) + (\lambda_i-x_0)\big)/2 = (\lambda_i - \lambda_0)/2$ also belongs to $S/2$. Then, $\lambda_i-\lambda_0\in (\Lambda\setminus\{0\}) \cap S$ for every $i\in \{ 1,\cdots,k\}$. By hypothesis, these $k$ vectors are all different. To obtain $2k$ different points of $S\cap \Lambda\setminus\{0\}$ (instead of $k$ different points), it suffices to consider also the points $\lambda_0-\lambda_i$; this collection is disjoint from the collection of $\lambda_i-\lambda_0$ because the $\lambda_i$ are sorted in lexicographical order. This proves the theorem.
\end{proof}

Minkowski theorem can be seen as a result about the function $\rho_\Gamma$. Recall that for a lattice $\Lambda$, $\sum_{u\in S} \rho_{\Lambda}(u)$ equals exactly the number of elements of $S\cap \Gamma$. Then, for a centrally symmetric convex body $S\subset\R^n$, 
\[
\sum_{u\in S} \rho_\Lambda(u) \ge 2\lceil D(\Lambda) \Leb(S/2)\rceil-1.
\]
Simply remark that the optimal $k$ in Theorem~\ref{Minkowski} is given by $k = \lceil D(\Lambda) \Leb(S/2)\rceil-1$. The following result is the main theorem of the paper.

\begin{theoreme}\label{MinkAlm}
Let $\Gamma$ be a Meyer subset of $\R^n$, and $S\subset\R^n$ be a centrally symmetric convex body. Then
\[\sum_{u\in S} \rho_\Gamma(u) \ge D^+(\Gamma) \Leb(S/2).\]
\end{theoreme}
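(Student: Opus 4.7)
The plan is to adapt Minkowski's auxiliary-function proof to the Meyer setting. In the classical argument for a lattice $\Lambda$, the periodicity of $\varphi(y) = \card(\Lambda \cap (y + S/2))$ lets one pass from its mean value to a pointwise value, then apply the convexity--symmetry trick to produce differences. For a Meyer set $\Gamma$ this pointwise step is unavailable, so I would instead extract the inequality by integrating $\varphi^2$ against a large ball. The key geometric fact still holds: if $\gamma_1, \gamma_2 \in y + S/2$, then $\gamma_1 - \gamma_2 = (\gamma_1 - y) - (\gamma_2 - y) \in S/2 + S/2 = S$ by central symmetry and convexity of $S$. Hence $\varphi(y)^2$ counts ordered pairs of elements of $\Gamma$ whose difference lies in $S$, and the plan is to turn this into an $L^2$ identity.

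Concretely, fix $\varep > 0$ and use the definition of $D^+(\Gamma)$ as a $\limsup$ of suprema to choose $R$ large and a center $x_R$ with $\card(\Gamma \cap B(x_R, R)) \ge (D^+(\Gamma) - \varep) \Leb(B(x_R, R))$. Setting $B = B(x_R, R)$, Fubini gives $\int_B \varphi(y)\, dy = \sum_{\gamma \in \Gamma} \Leb(B \cap (\gamma + S/2))$; each $\gamma$ at distance at least $\diam(S)/2$ from $\partial B$ contributes exactly $\Leb(S/2)$, and the boundary annulus contains only $O(R^{n-1})$ points of $\Gamma$, yielding
\[\int_B \varphi(y)\, dy \ge \bigl(D^+(\Gamma) - \varep - o(1)\bigr) \Leb(S/2) \Leb(B).\]

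Next, Cauchy--Schwarz gives $\int_B \varphi^2 \ge (\int_B \varphi)^2 / \Leb(B)$, so $\int_B \varphi^2 \ge (D^+(\Gamma) \Leb(S/2))^2 \Leb(B) - o(\Leb(B))$. Expanding the square the other way,
\[\int_B \varphi(y)^2\, dy = \sum_{\gamma_1,\gamma_2 \in \Gamma} \Leb\bigl(B \cap (\gamma_1 + S/2) \cap (\gamma_2 + S/2)\bigr),\]
and each summand vanishes unless $u := \gamma_1 - \gamma_2 \in S$, in which case it is at most $\Leb((S/2) \cap (S/2 - u)) \le \Leb(S/2)$. Regrouping by $u$ and observing that the intersection with $B$ is non-empty only when $\gamma_1, \gamma_2 \in B + S/2$ gives
\[\int_B \varphi^2 \le \Leb(S/2) \sum_{u \in S} \card\bigl((B + S/2) \cap \Gamma \cap (\Gamma + u)\bigr).\]
Since $\Gamma \cap (\Gamma + u) = u + \Gamma \cap (\Gamma - u)$ has uniform upper density $\rho_\Gamma(u) D^+(\Gamma)$, for $R$ sufficiently large each summand is bounded by $(\rho_\Gamma(u) D^+(\Gamma) + \varep') \Leb(B)(1 + o(1))$.

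Combining both estimates and dividing by $D^+(\Gamma) \Leb(S/2) \Leb(B)$ yields $D^+(\Gamma) \Leb(S/2) \le \sum_{u \in S} \rho_\Gamma(u) + O(\varep + \varep')$, and sending $\varep, \varep' \to 0$ gives the stated inequality. The main obstacle is coordinating two opposing demands on $R$: the lower bound on $\card(\Gamma \cap B)$ forces $R$ to come from the unbounded sequence realizing the $\limsup$ in $D^+(\Gamma)$, while the upper bounds on the densities of the sets $\Gamma \cap (\Gamma + u)$ require $R$ large enough that $D_R^+$ of each such set is close to its own $\limsup$ from above. These are compatible, and crucially the Meyer assumption makes $\Delta\Gamma$ uniformly discrete, so only finitely many $u \in S$ carry nonzero weight $\rho_\Gamma(u)$ and the total accumulated error from these finitely many terms can be driven to zero.
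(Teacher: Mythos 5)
Your proposal is correct and is essentially the paper's own argument: both perform the same double counting of pairs of points of $\Gamma$ lying in a common translate of $S/2$ (whose differences lie in $S$ by central symmetry and convexity), and both apply Cauchy--Schwarz to compare the second moment of $a\mapsto\card\big(\Gamma\cap(a+S/2)\big)$ over a large ball with the square of its first moment, which is $\Leb(S/2)$ times the density of $\Gamma$. Your explicit choice of a center $x_R$ realizing the supremum in $D^+(\Gamma)$, and your remark that the uniform discreteness of $\DG$ leaves only finitely many $u\in S$ to control, handle the only delicate points of the limiting argument.
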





\begin{rem}
One can note that this theorem does not involve the factor 2 present in the classical Minkowski theorem which results of the fact that to any point of a lattice falling in the centrally symmetric set $S$ corresponds its opposite, which also lies in $S$. The authors do not know if this factor 2 should or should not be present in Theorem~\ref{MinkAlm} and this fact still remains to be investigated.
\end{rem}

\begin{proof}[Proof of Theorem \ref{MinkAlm}]
The strategy of proof of this theorem is similar to that of the classical Minkowski theorem: we consider the set $\Gamma + S/2$ and define a suitable auxiliary function which depends on this set.
The argument is based on a double counting for the quantity
\begin{equation}\label{derEq}
\rho_a^R = \frac{1}{\Leb(B_R)}\sum_{v\in B_R\cap\Gamma} \1_{v\in(S/2+a)} \sum_{u\in S\cap\DG}\frac{\1_{v\in\Gamma} \1_{u+v\in\Gamma}}{D^+(\Gamma)}.
\end{equation}
When $R$ is large, $\rho_a^R$ can be interpreted as the approximate frequency of the differences falling in $S$ with the restriction that one of the point (in the difference) is in $S/2+a$. It can also be interpreted as the local approximate frequency in a neighbourhood of $a$ (the neighbourhood $S/2+a$). The convenience of this restriction is expressed in Equation \eqref{eq:minkov_argu}. A way to get a global expression, from this local definition of the frequency, is to sum over $a\in \R^n$.
\begin{align*}
\int_{\R^n} \rho_a^R d\lambda (a) & = \frac{1}{\Leb(B_R)}\sum_{v\in B_R\cap\Gamma} \sum_{u\in S\cap\DG}\frac{\1_{v\in\Gamma} \1_{u+v\in\Gamma}}{D^+(\Gamma)}\int_{\R^n}\1_{a\in(S/2+v)} d\lambda(a)\\
    & = \frac{1}{\Leb(B_R)}\sum_{v\in B_R\cap\Gamma} \sum_{u\in S\cap\DG}\frac{\1_{v\in\Gamma} \1_{u+v\in\Gamma}}{D^+(\Gamma)}\Leb(S/2)\\
    & = \Leb(S/2)\sum_{u\in S\cap\DG} \frac{1}{\Leb(B_R)}\sum_{v\in B_R\cap\Gamma} \frac{\1_{v\in\Gamma} \1_{u+v\in\Gamma}}{D^+(\Gamma)}.
\end{align*}
Thus, by the definition of $\rho_\Gamma$, we get 
\begin{equation}\label{res1}
\underset{R\to +\infty}{\overline\lim} \int_{\R^n} \rho_a^R d\lambda (a)\, \le\, \Leb(S/2)\sum_{u\in S\cap\DG} \rho_\Gamma(u).
\end{equation}

In sight of the last inequality, it remains to show a lower bound on the left hand side.
First of all, we remark that as $S$ is a centrally symmetric convex body, $x,y\in S/2$ implies that $x+y\in S$, thus
\begin{equation}
\label{eq:minkov_argu}
\1_{v\in S/2+a} \1_{u\in S}  \ge \1_{v\in S/2+a} \1_{u+v\in S/2+a}.
\end{equation}
Hence, multiplying both sides by $\1_{v\in\Gamma} \1_{u+v\in\Gamma} $, we get 
\[\1_{v\in(S/2+a)\cap\Gamma} \1_{u\in S} \1_{u+v\in\Gamma} \ge \1_{v\in(S/2+a)\cap\Gamma} \1_{u+v\in(S/2+a)\cap\Gamma}.\]
We now sum this inequality over $u\in\DG$ to get
\[
\sum_{u\in S\cap\DG} \1_{v\in(S/2+a)\cap\Gamma}\1_{u+v\in\Gamma} \ge \1_{v\in(S/2+a)\cap\Gamma} \sum_{u\in\DG}\1_{u+v\in(S/2+a)\cap\Gamma}.
\]
Remarking that for every $v\in\Gamma$, every $v'\in\Gamma$ can be written as $v'=u+v$ with $u\in\DG$, we deduce that
\begin{align*}
\sum_{u\in S\cap\DG} \1_{v\in(S/2+a)\cap\Gamma}\1_{u+v\in\Gamma} & \ge \1_{v\in (S/2+a)\cap\Gamma} \sum_{v'\in\Gamma}\1_{v'\in(S/2+a)\cap\Gamma}\\
     & \ge \1_{v\in (S/2+a)\cap\Gamma} \card\big((S/2+a)\cap\Gamma\big),
\end{align*}
and finally,
\[\rho_a^R \ge \frac{1}{D^+(\Gamma)}\frac{1}{\Leb(B_R)}\sum_{v\in B_R\cap\Gamma}\1_{v\in(S/2+a)} \card\big((S/2+a)\cap\Gamma\big).\]
We denote by $B_R^S$ the $S$-interior of $B_R$ and by $\BS$ the $S$-expansion of $B_R$,
\begin{align*}
	B_R^S &= \big(B_R^\complement + S\big)^\complement = \{x\in B_R\mid\forall s\in S, x+s\in B_R\}\\
	\BS &= B_R + S = \{x +s \mid x\in B_R,  s\in S\}
\end{align*}
In particular, $a\in B_R^S$ implies that $S/2+a\subset B_R$ and $a\in B_R$ implies that $S/2+a \in \BS$. Then
\begin{align*}
\int_{\R^n} \rho_a^R d\lambda (a) & \ge \frac{1}{D^+(\Gamma)} \frac{1}{\Leb(B_R)} \int_{\R^n} \left(\sum_{v\in B_R\cap\Gamma}\1_{v\in(S/2+a)} \card\big((S/2+a)\cap\Gamma\big) \right)d\lambda(a)\\
    & \ge \frac{1}{D^+(\Gamma)}\frac{1}{\Leb(B_R)}\int_{B_R^S}\left(\sum_{v\in B_R\cap\Gamma} \1_{v\in(S/2+a)} \card\big((S/2+a)\cap\Gamma\big)\right)d\lambda(a)\\
		& \ge \frac{1}{D^+(\Gamma)}\frac{1}{\Leb(B_R)}\int_{B_R^S} \card\big((S/2+a)\cap\Gamma\big)^2d\lambda(a).
\end{align*}
Using the convexity of $x\mapsto x^2$, we deduce that
\begin{equation}\label{res2}
\underset{R\to +\infty}{\overline\lim} \int_{\R^n} \rho_a^R d\lambda(a) \ge \underset{R\to +\infty}{\overline\lim}\ \frac{\Leb(B_R^S)}{D^+(\Gamma)\Leb(B_R)}\left(\frac{1}{\Leb(B_R^S)}\int_{B_R^S} \card\big((S/2+a)\cap \Gamma\big)d\lambda(a)\right)^2.
\end{equation}
We then use the fact that the family $\{B_R\}_{R>0}$ is van Hove when $R$ goes to infinity (see for example \cite[Equation 4]{MR1884143}), that is
\begin{align*}
	\lim_{R\to +\infty} \frac{\Leb(B_R)-\Leb(B_R^S)}{\Leb(B_R)} = 0 \quad \text{and} \quad
	\lim_{R\to +\infty} \frac{\Leb(B_R)-\Leb(\BS)}{\Leb(B_R)} = 0.
\end{align*}
It remains to compute the remaining term in Equation~\eqref{res2},
\[
\frac{1}{\Leb(B_R^S)}\int_{B_R^S} \card\big((S/2+a)\cap\Gamma\big)d\lambda(a).
\]
The quantity $\card\big((S/2+a)\cap\Gamma\big)$ is bounded by some constant $M$ (as $S$ can be included in some ball of large radius), independently from $a$ and is equal to 
\[
\sum_{v\in B_R\cap\Gamma} \1_{v\in (S/2+a)\cap\Gamma} \quad \text{ for all } a \in B_R^S.
\]
Hence,
\[
 \frac{1}{\Leb(B_R^S)}\left|\int_{B_R^S} \card\big((S/2+a)\cap\Gamma\big)d\lambda(a)-\int_{\BS}\sum_{v\in B_R\cap\Gamma} \1_{v\in S/2+a} d\lambda(a)\right| \le M \frac{\Leb(\BS \setminus B_R^S)}{\Leb(B_R^S)};
\]
thus the two integrals have the same limit superior when $R$ tends to $+\infty$. Besides,
\begin{align*}
\frac{1}{\Leb(B_R^S)}\int_{\BS}\sum_{v\in B_R\cap\Gamma} \1_{v\in S/2+a} d\lambda(a) & = \frac{1}{\Leb(B_R^S)}\sum_{v\in B_R\cap\Gamma} \int_{\BS} \1_{a\in S/2+v}d\lambda(a)\\
      & = \frac{1}{\Leb(B_R^S)}\sum_{v\in B_R\cap\Gamma} \Leb(S/2)\\
      & = \frac{\Leb(B_R)}{\Leb(B_R^S)}\frac{\card( B_R\cap\Gamma)}{\Leb(B_R)} \Leb(S/2).
\end{align*}

Applied to Equation \eqref{res2}, this gives 
\[\underset{R\to +\infty}{\overline\lim} \int_{\R^n} \rho_a^R d\lambda(a) \ge \Leb(S/2)^2 D^+(\Gamma).\]

To finish the proof, we combine the last inequality with the first estimate of Equation \eqref{res1} and get
\[\sum_{u\in S\cap\DG} \rho_\Gamma(u) \ge \Leb(S/2) D^+(\Gamma).\]
\end{proof}

\section{Weakly almost periodic sets}
\label{sec:wapsets}

In this section, we describe a family of Meyer sets called \emph{weakly almost periodic sets}. For these sets, the superior limits appearing in the definitions of the upper density and the frequency of differences are actually limits. Roughly speaking, a weakly almost periodic set $\Gamma$ is a set for which two large patches are almost identical, up to a set of upper density smaller than $\varep$. More precisely, we have the following definition.

\begin{definition}\label{wap}
We say that a Delone set $\Gamma$ is \emph{weakly almost periodic} if for every $\varep>0$, there exists $R>0$ such that for every $x,y\in\R^n$, there exists $v\in\R^n$ such that
\begin{equation}\label{EqWeakAlmPer}
\frac{\card\Big( \Big(B(x,R)\cap\Gamma\Big) \Delta \Big(\big(B(y,R)\cap\Gamma\big)-v\Big) \Big)}{\Leb(B_R)} \le \varep.
\end{equation}
\end{definition}

Note that the vector $v$ is different from $y-x$ \emph{a priori}. The Delone set assumption is not restrictive in the later definition. Indeed, any $\Gamma \subset\Z^n$ with positive upper density ($D^+(\Gamma) >0$) and satisfying Equation~\eqref{EqWeakAlmPer} is a Delone set.
Of course, every lattice, or every finite union of translates of a given lattice, is weakly almost periodic.

A weakly almost periodic set possesses a uniform density, as stated by the following proposition of \cite{Gui15d}.

\begin{prop}\label{limitexist}
Let $\Gamma$ be a weakly almost periodic set. Then there exists a number $D(\Gamma)$, called the \emph{uniform density} of $\Gamma$, satisfying: for every $\varep>0$, there exists $R_\varep>0$ such that for every $R>R_\varep$ and every $x\in \R^n$,
\[\left| \frac{\card\big(B(x,R)\cap \Gamma\big)}{\Leb\big(B(x,R)\big)} - D(\Gamma) \right| < \varep.\]
In particular, $D(\Gamma) = D^+(\Gamma)$, and for every $x\in\R^n$, we have
\[D(\Gamma) = \lim_{R\to +\infty}\frac{\card\big(B(x,R)\cap \Gamma\big)}{\Leb\big(B(x,R)\big)}.\]
\end{prop}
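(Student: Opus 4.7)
The plan is to extract from weak almost periodicity a uniform bound on the oscillation of $f_R(x) := \card(B(x,R)\cap\Gamma)/\Leb(B(x,R))$ at some fixed scale $R_\varep$, and then propagate it to arbitrarily large $R$ by a Fubini-style comparison argument.

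First, the key consequence of Definition~\ref{wap} is that for every $\varep>0$ there exists $R_\varep>0$ with $\sup_{x,y\in\R^n} |f_{R_\varep}(x) - f_{R_\varep}(y)| \le \varep$. Indeed, for the pair $(A,B) = (B(x,R_\varep)\cap\Gamma,\, (B(y,R_\varep)\cap\Gamma) - v)$ appearing in Equation~\eqref{EqWeakAlmPer}, translation preserves cardinalities, so $|\card(A) - \card(B)| \le \card(A\Delta B) \le \varep\Leb(B_{R_\varep})$. Setting $c := f_{R_\varep}(0)$, one thus has $|f_{R_\varep}(y) - c| \le \varep$ for every $y \in \R^n$.

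Next, for $R$ much larger than $R_\varep$, I would apply Fubini to $\int_{B(x,R)} \card(B(y,R_\varep)\cap\Gamma)\, d\lambda(y) = \sum_{w\in\Gamma} \Leb\bigl(B(x,R)\cap B(w,R_\varep)\bigr)$ and split the right-hand side according to whether $w$ lies in the interior $B(x,R-R_\varep)$, outside $B(x,R+R_\varep)$, or in the annulus between the two. The interior contribution equals $\Leb(B_{R_\varep}) \card(B(x,R-R_\varep)\cap\Gamma)$, while the total sum is at most $\Leb(B_{R_\varep}) \card(B(x,R+R_\varep)\cap\Gamma)$. On the other hand, by the first step the integrand lies in $[(c-\varep)\Leb(B_{R_\varep}),\, (c+\varep)\Leb(B_{R_\varep})]$. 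Dividing the resulting two-sided inequality by $\Leb(B_R)\Leb(B_{R_\varep})$ and using $\Leb(B_{R\pm R_\varep})/\Leb(B_R)\to 1$ as $R\to\infty$, I conclude that both $\liminf_{R\to\infty} f_R(x)$ and $\limsup_{R\to\infty} f_R(x)$ lie in $[c-\varep, c+\varep]$ uniformly in $x$. Since $\varep>0$ is arbitrary, $f_R$ converges uniformly to a constant $D(\Gamma)$, and the equality $D(\Gamma) = D^+(\Gamma)$ is then immediate from Definition~\ref{uud}.

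The main obstacle is controlling the boundary correction in the Fubini step. It is harmless thanks to the Delone hypothesis, which forces $\card(B(x,R+R_\varep)\cap\Gamma) - \card(B(x,R-R_\varep)\cap\Gamma)$ to be of order $R^{n-1}$, but one must carefully keep $R_\varep$ frozen while sending $R\to\infty$ so that the boundary-to-bulk ratio vanishes; otherwise the annular contribution could compete with the bulk and spoil the uniform estimate.
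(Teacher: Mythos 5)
Your argument is correct. Note that the paper itself gives no proof of Proposition~\ref{limitexist}: it is quoted from the reference \cite{Gui15d}, so there is nothing in the text to compare against; but your two-scale argument is a sound and self-contained way to establish it. The first step is right: translation invariance of cardinality plus $|\card A-\card B|\le\card(A\Delta B)$ turns Equation~\eqref{EqWeakAlmPer} into the uniform oscillation bound $\sup_{x,y}|f_{R_\varep}(x)-f_{R_\varep}(y)|\le\varep$ at the single scale $R_\varep$ furnished by Definition~\ref{wap}. The Fubini sandwich
\[
\Leb(B_{R_\varep})\,\card\big(B(x,R-R_\varep)\cap\Gamma\big)\ \le\ \int_{B(x,R)}\card\big(B(y,R_\varep)\cap\Gamma\big)\,d\lambda(y)\ \le\ \Leb(B_{R_\varep})\,\card\big(B(x,R+R_\varep)\cap\Gamma\big)
\]
is exactly what is needed, and combined with the pointwise bounds on the integrand it pins $f_{R'}(x)$ between $(c-\varep)\Leb(B_{R'-R_\varep})/\Leb(B_{R'})$ and $(c+\varep)\Leb(B_{R'+R_\varep})/\Leb(B_{R'})$, uniformly in $x$. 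Two small points you should make explicit. First, the constant $c=c(\varep)=f_{R_\varep}(0)$ depends on $\varep$, so ``$\varep$ arbitrary'' does not by itself produce the limit $D(\Gamma)$; you need the one-line Cauchy argument that $|c(\varep_1)-c(\varep_2)|\le 2\varep_1+2\varep_2$ (compare both to $f_R(0)$ for $R$ large), which gives the limit $D(\Gamma)=\lim_{\varep\to 0}c(\varep)$ and then the uniform convergence. Second, your closing paragraph overstates the role of the Delone hypothesis: the annular count of order $R^{n-1}$ is not actually needed, since your two-sided inequality already avoids the annulus entirely; what uniform discreteness does give you is that each $c(\varep)$ is finite (bounded by $D^+_{R_\varep}(\Gamma)\le C(r_\Gamma)$), which is what lets you absorb the volume-ratio error $(c+\varep)\big(\Leb(B_{R'+R_\varep})/\Leb(B_{R'})-1\big)$ into an extra $\varep$ for $R'$ large. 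With those clarifications the proof is complete, and the identity $D(\Gamma)=D^+(\Gamma)$ follows at once since $D_R^+(\Gamma)=\sup_x f_R(x)\to D(\Gamma)$.
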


As noted in \cite{Gui15d}, it seems that the notion of weakly almost periodicity is the weakest that allows this uniform convergence of density.
\bigskip

An important class of examples of weakly almost periodic sets is given by \emph{model sets} (sometimes also called ``cut-and-project'' sets). These sets have numerous applications to theory of quasicrystals, harmonic analysis, number theory, discrete dynamics etc. (see for instance \cite{MR0485769} or \cite{Moody25}).

\begin{definition}\label{DefModel}
Let $\Lambda$ be a lattice of $\R^{m+n}$, $p_1$ and $p_2$ the projections of $\R^{m+n}$ on respectively $\R^m\times \{0\}_{\R^n}$ and $\{0\}_{\R^m} \times \R^n$, and $W$ a Riemann integrable subset of $\R^m$. The \emph{model set} modelled on the lattice $\Lambda$ and the \emph{window} $W$ is (see Figure~\ref{FigModel})
\[\Gamma = \big\{ p_2(\lambda)\mid \lambda\in\Lambda,\, p_1(\lambda)\in W \big\}.\]
\end{definition}

\begin{figure}[t]
\begin{center}
\begin{tikzpicture}[scale=1]
\fill[color=blue!10!white] (-.6,-2) rectangle (.9,2);
\draw[color=blue!80!black] (-.6,-2) -- (-.6,2);
\draw[color=blue!80!black] (.9,-2) -- (.9,2);
\draw[color=blue!80!black, very thick] (-.6,0) -- (.9,0);
\draw[color=blue!80!black] (.25,0) node[below] {$W$};
\draw (-3,0) -- (3,0);
\draw (0,-2) -- (0,2);
\clip (-3,-2) rectangle (3,2);

\draw (.866,.364) -- (0,.364);
\draw (-.129,.987) -- (0,.987);
\draw (.737,1.351) -- (0,1.351);
\draw (-.258,1.974) -- (0,1.974);
\draw (.129,-.987) -- (0,-.987);
\draw (.258,-1.974) -- (0,-1.974);

\draw (0,0) node {$\times$};
\draw (0,.364) node {$\times$};
\draw (0,.987) node {$\times$};
\draw (0,1.351) node {$\times$};
\draw (0,1.974) node {$\times$};
\draw (0,-.987) node {$\times$};
\draw (0,-1.974) node {$\times$};

\foreach\i in {-3,...,3}{
\foreach\j in {-3,...,3}{
\draw[color=green!40!black] (.866*\i-.129*\j,.364*\i+.987*\j) node {$\bullet$};
}}
\draw[color=green!40!black] (1.2,-.8) node {$\Lambda$};
\end{tikzpicture}
\caption[Model set]{Construction of a model set.}\label{FigModel}
\end{center}
\end{figure}
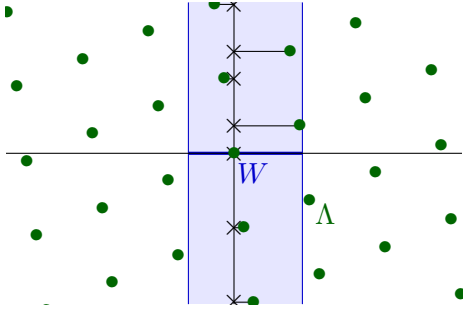

In \cite{Gui15d} it is proved that these sets are weakly almost periodic sets. Moreover, if the projection $p_2$ is injective when restricted to $\Lambda$, and the set $p_2(\Lambda)$ is dense, then the density of the obtained model set is equal to $\operatorname{Vol}(W)\operatorname{Covol}(\Lambda)$ (see for example Proposition 4.4 of \cite{MR2876415}).
\bigskip

From now, we suppose that the weakly almost periodic sets we consider are Meyer. The following lemma states that the occurrences of a given difference in a weakly almost periodic set form a weakly almost periodic set.

\begin{lemme}\label{Moule}
Let $v\in\R^n$ and $\Gamma$ be a Meyer, weakly almost periodic set. Then the set
\[\{x\in\Gamma\mid x+v\in\Gamma\} = \Gamma\cap(\Gamma-v)\]
is weakly almost periodic.
\end{lemme}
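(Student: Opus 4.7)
The plan is to transfer the weak almost periodicity of $\Gamma$ to $\Gamma' := \Gamma \cap (\Gamma - v)$ by applying the WAP property of $\Gamma$ on balls of slightly enlarged radius and showing that the \emph{same} witness $w$ still works for $\Gamma'$. Fix $\varep > 0$. Choose $\varep_0 > 0$ small (to be specified), and by weak almost periodicity of $\Gamma$ obtain $R_0$ such that for all $x, y \in \R^n$ there is $w \in \R^n$ with
\[
\card\bigl((\Gamma \cap B(x, R_0)) \Delta ((\Gamma \cap B(y, R_0)) - w)\bigr) \leq \varep_0 \Leb(B_{R_0}).
\]
Assume $R_0 \geq 2|v|$ and set $R := R_0 - |v|$.

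The core of the argument is a short set-theoretic case analysis. Write $P := \Gamma \cap B(x, R_0)$ and $Q := (\Gamma \cap B(y, R_0)) - w$, so that $|P \Delta Q| \leq \varep_0 \Leb(B_{R_0})$. A point $z \in B(x, R)$ lies in $\Gamma'$ iff both $z$ and $z+v$ lie in $\Gamma$, and both automatically lie in $B(x, R_0)$. Similarly $z \in (\Gamma' \cap B(y, R)) - w$ iff $z + w \in \Gamma$, $z + v + w \in \Gamma$, and $z + w \in B(y, R)$. Splitting the negation of this conjunction into three cases yields
\[
(\Gamma' \cap B(x, R)) \Delta ((\Gamma' \cap B(y, R)) - w) \subseteq (P \Delta Q) \cup \bigl((P \Delta Q) - v\bigr) \cup E,
\]
where $E$ collects the points of $\Gamma \cup (\Gamma - w)$ lying in the ``ball-mismatch'' region $B(x, R) \Delta B(y - w, R)$. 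The first two sets together contribute at most $2 \varep_0 \Leb(B_{R_0})$ points.

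To control $|E|$, set $d := |x - (y - w)|$. The inclusion $\Gamma \cap (B(x, R_0) \setminus B(y - w, R_0)) \subseteq P \setminus Q$ yields $\card(\Gamma \cap (B(x, R_0) \setminus B(y - w, R_0))) \leq \varep_0 \Leb(B_{R_0})$. Combining this with the positivity and uniformity of the density $D(\Gamma) > 0$ (Proposition~\ref{limitexist}) together with the Meyer/Delone structure of $\Gamma$ (which provides a matching upper bound of the form $\card(\Gamma \cap T) \leq (D(\Gamma) + o(1)) \Leb(T)$ on large enough regions $T$), one deduces that $\Leb(B(x, R_0) \setminus B(y - w, R_0))/\Leb(B_{R_0}) = O(\varep_0) + o_{R_0 \to \infty}(1)$. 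By the elementary geometry of Euclidean balls, this forces $d/R_0 = O(\varep_0) + o(1)$, whence $\Leb(B(x, R) \Delta B(y - w, R))/\Leb(B_R) = O(\varep_0) + o(1)$, and since $D^+(\Gamma) < \infty$, $|E|/\Leb(B_R) = O(\varep_0) + o(1)$.

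Assembling the bounds and using that $\Leb(B_{R_0})/\Leb(B_R) = (1 + |v|/R)^n \to 1$, we obtain
\[
\frac{\card\bigl((\Gamma' \cap B(x, R)) \Delta ((\Gamma' \cap B(y, R)) - w)\bigr)}{\Leb(B_R)} \leq C\,\varep_0 + o_{R \to \infty}(1)
\]
for a constant $C = C(\Gamma, v)$, and choosing $\varep_0 := \varep/(2C)$ together with $R_0$ large enough yields the required estimate. The main obstacle is the geometric step of paragraph three: converting the WAP cardinality estimate on $P \setminus Q$ into the volume (and hence distance) bound on $B(x, R_0) \setminus B(y - w, R_0)$. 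This step is precisely where the Meyer hypothesis is essential, as it is the uniform lower density coming from Proposition~\ref{limitexist} that prevents the WAP witness $w$ from shifting $y$ to a region far from $x$.
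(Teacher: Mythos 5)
Your set-theoretic decomposition of $(\Gamma'\cap B(x,R))\,\Delta\,((\Gamma'\cap B(y,R))-w)$ into $(P\Delta Q)\cup((P\Delta Q)-v)\cup E$ is correct, and the overall strategy (show that the witness $w$ for $\Gamma$ still witnesses almost periodicity for $\Gamma'=\Gamma\cap(\Gamma-v)$, at the slightly smaller radius $R=R_0-|v|$) is viable. But the key geometric step is wrong as written. From $\card\big(\Gamma\cap T\big)\le\varep_0\Leb(B_{R_0})$ with $T=B(x,R_0)\setminus B(y-w,R_0)$ you deduce $\Leb(T)=O(\varep_0)\Leb(B_{R_0})$; this presupposes a lower bound $\card(\Gamma\cap T)\ge c\,\Leb(T)$ for the lune $T$, and no such bound holds for thin regions: writing $d=|x-(y-w)|$, the lune has volume of order $d\,R_0^{n-1}$ but is a sliver of width at most $d$, so relative denseness, and even Proposition~\ref{limitexist}, only guarantee on the order of $d^n$ points inside it (coming from an inscribed ball of radius about $d/4$ near its thickest part). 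Consequently the intermediate claims $\Leb(T)/\Leb(B_{R_0})=O(\varep_0)+o(1)$ and $d/R_0=O(\varep_0)+o(1)$ are unjustified, and the second is false in general. The step is repairable: the lune contains a ball of radius $\min(d,2R_0)/4$, so Proposition~\ref{limitexist} applied to that ball gives $\card(\Gamma\cap T)\ge c\,D(\Gamma)\,d^n$ once $d/4$ exceeds the threshold radius, whence $d\le C\,\varep_0^{1/n}R_0$. This weaker rate still yields $\Leb\big(B(x,R)\,\Delta\,B(y-w,R)\big)/\Leb(B_R)=O(\varep_0^{1/n})+o(1)$ and hence $\card(E)/\Leb(B_R)=O(\varep_0^{1/n})+o(1)$ --- for this last bound you should invoke uniform discreteness (count points via an $r_\Gamma$-thickening of the lune) rather than merely $D^+(\Gamma)<\infty$, since a thin region of small volume could otherwise contain many points. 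Choosing $\varep_0$ of order $\varep^n$ then closes the argument.

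Even once repaired, your route is genuinely different from, and heavier than, the paper's. The paper never needs to locate $y-w$ relative to $x$: it splits the symmetric difference using $(A\cap B)\,\Delta\,(C\cap D)\subseteq(A\,\Delta\, C)\cup(B\,\Delta\, D)$, keeps the same witness $w$, translates the $(\Gamma-v)$-part by $v$, and then uses a triangle inequality together with the elementary estimate that $\card\big(\Gamma\cap(B(z,R)\,\Delta\, B(z+v,R))\big)\le\varep\Leb(B_R)$ for $R$ large compared with $\|v\|$ --- an estimate valid for any set of finite upper density. Your proof instead establishes the true but non-free fact that every admissible witness $w$ must send $y$ to within $o(R_0)$ of $x$, which is precisely where the positive uniform lower density must enter; the paper's argument sidesteps this entirely and uses only the upper density.
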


\begin{proof}[Proof of Lemma~\ref{Moule}]
Since $\Gamma$ is Meyer, its set of differences is Delone. 

Let $\varep>0$ and $v\in\R^n$. As $\Gamma$ is a weakly almost periodic set, for every $\varep>0$, there exists $R>0$ such that for every $x,y\in\R^n$, there exists $w\in\R^n$ such that
\begin{equation}\label{eqmoule1}
\frac{\card\Big( \Big(B(x,R)\cap\Gamma\Big) \Delta \Big(\big(B(y,R)\cap\Gamma\big)-w\Big) \Big)}{\Leb(B_R)} \le \varep.
\end{equation}
On the other hand, considering a smaller $\varep$ if necessary, one can choose $R$ arbitrarily large compared to $\|v\|$. In this case, we have, for every $z\in\R^n$ (and in particular for $x$ and $y$),
\begin{equation}\label{eqmoule2}
\frac{\card\Big( \Big(B(z,R)\cap\Gamma\Big) \Delta \Big(\big(B(z+v,R)\cap\Gamma\big) \Big) \Big)}{\Leb(B_R)} \le \frac{\card\Big( B(z,R) \Delta B(z+v,R)\Big)}{\Leb(B_R)} \le \varep.
\end{equation}

Let $x,y\in\R^n$. We can now estimate the quantity
\[A \doteq \frac{\card\Big( \Big(B(x,R)\cap\Gamma\cap(\Gamma-v)\Big) \Delta \Big(\big(B(y,R)\cap\Gamma\cap(\Gamma-v)\big)-w\Big) \Big)}{\Leb(B_R)}:\]
\begin{align*}
A \le & \frac{\card\Big( \Big(B(x,R)\cap\Gamma\Big) \Delta \Big(\big(B(y,R)\cap\Gamma\big)-w\Big) \Big)}{\Leb(B_R)}\\
      & + \frac{\card\Big( \Big(B(x,R)\cap(\Gamma-v)\Big) \Delta \Big(\big(B(y,R)\cap(\Gamma-v)\big)-w\Big) \Big)}{\Leb(B_R)}.
\end{align*}
The first term is smaller than $\varep$ by Equation~\eqref{eqmoule1}; and the second term (denoted by $A_2$) is smaller than (by a translation of vector $v$)
\[A_2 \le \frac{\card\Big( \Big(B(x+v,R)\cap\Gamma\Big) \Delta \Big(\big(B(y+v,R)\cap\Gamma\big)-w\Big) \Big)}{\Leb(B_R)},\]
which by Equations~\eqref{eqmoule1} and \eqref{eqmoule2} leads to
\[A_2 \le 3\varep.\]
Finally, $A\le 4\varep$.
\end{proof}

When $\Gamma$ is weakly almost periodic, we deduce, from Lemma~\ref{Moule} together with Proposition \ref{limitexist}, that the upper limits appearing in the uniform upper density $D^+(\Gamma)$ and the frequencies of differences $\rho_\Gamma(v)$ are, in fact, limits. Moreover, $\rho_\Gamma$ possesses a mean (see Definition~\ref{mine}) that can be computed easily.

\begin{prop}\label{IntRho}
If $\Gamma$ is Meyer and weakly almost periodic, then 
\[\mathcal M (\rho_\Gamma) = D(\Gamma).\]
\end{prop}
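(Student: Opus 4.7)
The plan is to compute the average $\frac{1}{\Leb(B(x,R))}\sum_{v\in B(x,R)}\rho_\Gamma(v)$ by a double-counting (Fubini-style) argument, and to show that it converges to $D(\Gamma)$ uniformly in $x$ as $R\to+\infty$. First, for every $v$, Lemma~\ref{Moule} tells us that $\Gamma\cap(\Gamma-v)$ is weakly almost periodic, so by Proposition~\ref{limitexist} the upper limit in Definition~\ref{DefDiff} is a genuine limit:
\[\rho_\Gamma(v) = \frac{1}{D(\Gamma)}\lim_{R'\to+\infty}\frac{\card\bigl(\Gamma\cap(\Gamma-v)\cap B_{R'}\bigr)}{\Leb(B_{R'})}.\]

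Second, since $\Gamma$ is Meyer, $\DG$ is uniformly discrete, so $B(x,R)\cap\DG$ is finite and the sum $\sum_{v\in B(x,R)}$ (which is supported on $\DG$) can be interchanged with the limit in $R'$. Counting the triples $(y,v)$ with $y\in\Gamma\cap B_{R'}$, $v\in B(x,R)$ and $y+v\in\Gamma$ in two ways gives
\[\sum_{v\in B(x,R)\cap\DG}\card\bigl(\Gamma\cap(\Gamma-v)\cap B_{R'}\bigr) \;=\; \sum_{y\in\Gamma\cap B_{R'}}\card\bigl(\Gamma\cap B(y+x,R)\bigr).\]

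Third, given $\varep>0$, Proposition~\ref{limitexist} produces $R_\varep>0$ such that, for $R\ge R_\varep$ and every $z\in\R^n$, $\bigl|\card(\Gamma\cap B(z,R))/\Leb(B_R) - D(\Gamma)\bigr|<\varep$. Applied uniformly with $z=y+x$, this reduces the inner sum to $\Leb(B_R)\card(\Gamma\cap B_{R'})\bigl(D(\Gamma)+O(\varep)\bigr)$, where the $O(\varep)$ is uniform in $x$ and $y$. Dividing by $\Leb(B_{R'})$ and letting $R'\to+\infty$ uses Proposition~\ref{limitexist} a second time (for $\Gamma$ itself), which yields $\Leb(B_R)\,D(\Gamma)^2 + O\bigl(\varep\,\Leb(B_R)\,D(\Gamma)\bigr)$. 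Dividing by $D(\Gamma)\Leb(B_R)$ then gives
\[\frac{1}{\Leb(B(x,R))}\sum_{v\in B(x,R)}\rho_\Gamma(v) \;=\; D(\Gamma) + O(\varep),\]
uniformly in $x$, which is precisely $\mathcal M(\rho_\Gamma)=D(\Gamma)$.

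The main delicate point is ensuring that all estimates remain uniform simultaneously in the base point $x$, the running lattice point $y$, and the scale $R$; this uniformity is exactly what Proposition~\ref{limitexist} supplies, so the argument goes through without any additional ingredient. The only subsidiary technicality is justifying the interchange of the $v$-sum with the $R'\to+\infty$ limit, for which the Meyer assumption is essential since it guarantees that $B(x,R)\cap\DG$ has cardinality bounded by a constant depending only on $R$ (through the uniform discreteness of $\DG$).
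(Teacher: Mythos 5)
Your proposal is correct and follows essentially the same route as the paper's proof: both interchange the sum over $v\in B(x,R)$ with the sum over $y\in\Gamma\cap B_{R'}$ (your pair-counting identity is exactly the paper's Fubini step with indicator functions), and both then invoke the uniform convergence of $\card(\Gamma\cap B(y+x,R))/\Leb(B_R)$ to $D(\Gamma)$ from Proposition~\ref{limitexist}, applied uniformly in the shifted centers $y+x$. Your explicit justification of the exchange of the finite $v$-sum with the $R'$-limit via the uniform discreteness of $\DG$ is a point the paper leaves implicit, but it is the same argument.
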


\begin{proof}[Proof of Proposition \ref{IntRho}]
This proof lies primarily in an inversion of limits.

Let $\varep>0$. As $\Gamma$ is weakly almost periodic, by Proposition~\ref{limitexist}, there exists $R_0>0$ such that for every $R\ge R_0$ and every $x\in\R^n$, we have
\begin{equation}\label{eqDens}
\left|D(\Gamma) - \frac{\Gamma \cap B(x,R)}{\Leb(B_R)}\right|\le \varep.
\end{equation}

So, we choose $R\ge R_0$, $x\in\Z^n$ and compute
\begin{align*}
\frac{1}{\Leb(B_R)} & \sum_{v\in B(x,R)} \rho_\Gamma(v) = \frac{1}{\Leb(B_R)}\sum_{v\in B(x,R)} \frac{D\big((\Gamma-v)\cap \Gamma\big)}{D(\Gamma)}\\
       = & \frac{1}{\Leb(B_R)}\sum_{v\in B(x,R)} \lim_{R'\to +\infty}\frac{1}{\Leb(B_{R'})}\sum_{y\in B_{R'}} \frac{\1_{y\in\Gamma-v} \1_{y\in\Gamma}}{D(\Gamma)}\\
       = & \frac{1}{D(\Gamma)}\lim_{R'\to +\infty}\frac{1}{\Leb(B_{R'})} \sum_{y\in B_{R'}} \1_{y\in\Gamma}\frac{1}{\Leb(B_R)}\sum_{v\in B(x,R)} \1_{y\in\Gamma-v}\\
       = & \frac{1}{D(\Gamma)}\underbrace{\lim_{R'\to +\infty}\frac{1}{\Leb(B_{R'})} \sum_{y\in B_{R'}} \1_{y\in\Gamma}}_{\text{first term}}\underbrace{\frac{1}{\Leb(B_R)}\sum_{v'\in B(y+x,R)} \1_{v'\in\Gamma}}_{\text{second term}}.
\end{align*}
By Equation \eqref{eqDens}, the second term is $\varep$-close to $D(\Gamma)$. Considered independently, the first term is equal to $D(\Gamma)$ (still by Equation \eqref{eqDens}). Thus, we have
\[\left|\frac{1}{\Leb\big(B(x,R)\big)} \sum_{v\in B(x,R)} \rho_\Gamma(v) - D(\Gamma)\right|\le \varep,\]
which conclude the proof.
\end{proof}

Theorem \ref{MinkAlm} may now be reformulated in the context of Meyer weakly almost periodic sets. 

\begin{coro}\label{CoroMinkAlm}
If $\Gamma \subset \Z^n$ is a weakly almost periodic set, then
\[
\sum_{u\in S} \rho_\Gamma(u) \ge D(\Gamma) \card(S/2\cap\Z^n).
\]
\end{coro}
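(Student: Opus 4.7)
The strategy is to mimic the proof of Theorem~\ref{MinkAlm} essentially verbatim, replacing the Lebesgue integral $\int_{\R^n}\cdot\,d\lambda(a)$ with the counting sum $\sum_{a\in\Z^n}$. This substitution is natural because $\Gamma\subset\Z^n$: the frequency $\rho_\Gamma$ is supported on $\Z^n$, and every relevant intersection $(S/2+a)\cap\Gamma$ can be indexed by lattice shifts without loss. I would keep the definition of $\rho_a^R$ from equation~\eqref{derEq} unchanged, and bound $\sum_{a\in\Z^n}\rho_a^R$ from both sides in the same manner as before.

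For the upper bound, I would exchange summations to produce the factor $\sum_{a\in\Z^n}\1_{a\in S/2+v}=\card(S/2\cap\Z^n)$, valid for every $v\in\Z^n$ by translation invariance of $\Z^n$. Following the original computation and invoking Proposition~\ref{limitexist} (which upgrades $D^+$ to a genuine limit for weakly almost periodic sets) together with Lemma~\ref{Moule}, one obtains the true limit
\[\lim_{R\to+\infty}\sum_{a\in\Z^n}\rho_a^R \;=\; \card(S/2\cap\Z^n)\sum_{u\in S\cap\DG}\rho_\Gamma(u).\]
The sum over $u$ has finitely many nonzero terms because $\DG$ is uniformly discrete and $S$ is bounded, so the interchange of sum and limit is unproblematic.

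For the lower bound, the centrally symmetric convex body inequality $\1_{v\in S/2+a}\1_{u\in S}\ge\1_{v\in S/2+a}\1_{u+v\in S/2+a}$ still holds and gives, exactly as in the original proof, $\rho_a^R\ge\frac{1}{D(\Gamma)\Leb(B_R)}\sum_{v\in B_R\cap\Gamma}\1_{v\in S/2+a}\,\card\bigl((S/2+a)\cap\Gamma\bigr)$. Restricting the outer sum to $a\in\Z^n\cap B_R^S$ ensures $S/2+a\subset B_R$, so the inner sum collapses to $\card\bigl((S/2+a)\cap\Gamma\bigr)^2$. Applying the convexity inequality $\sum_a x_a^2\ge(\sum_a x_a)^2/N$ with $N=\card(\Z^n\cap B_R^S)$ yields
\[\sum_{a\in\Z^n}\rho_a^R \;\ge\; \frac{\bigl(\sum_{a\in\Z^n\cap B_R^S}\card((S/2+a)\cap\Gamma)\bigr)^2}{D(\Gamma)\,\Leb(B_R)\,\card(\Z^n\cap B_R^S)},\]
and exchanging sums in the numerator produces $\card(S/2\cap\Z^n)\cdot\card(B_R\cap\Gamma)$, up to boundary contributions from points of $\Gamma$ close to $\partial B_R$.

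Passing to the limit $R\to+\infty$, the van Hove property $\Leb(B_R\setminus B_R^S)/\Leb(B_R)\to 0$, the comparison $\card(\Z^n\cap B_R^S)\sim\Leb(B_R)$, and the convergence $\card(B_R\cap\Gamma)/\Leb(B_R)\to D(\Gamma)$ from Proposition~\ref{limitexist} together reduce the lower bound to $D(\Gamma)\card(S/2\cap\Z^n)^2$. Combined with the upper bound, this gives $\card(S/2\cap\Z^n)\sum_u\rho_\Gamma(u)\ge D(\Gamma)\card(S/2\cap\Z^n)^2$, and dividing by $\card(S/2\cap\Z^n)\ge 1$ yields the conclusion. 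The main obstacle is the discrete bookkeeping of boundary effects: one must verify that $\card\{a\in\Z^n\cap B_R^S:v\in S/2+a\}$ genuinely equals $\card(S/2\cap\Z^n)$ for $v$ sufficiently deep inside $B_R$, and that points of $\Gamma$ near the boundary of $B_R$ contribute a vanishing fraction. Both are routine consequences of the van Hove property and the uniform discreteness of $\Gamma$, just as in the proof of Theorem~\ref{MinkAlm}.
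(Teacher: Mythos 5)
Your proof is correct and follows exactly the route the paper indicates for this corollary: replace the integral of $\rho_a^R$ over $\R^n$ by a sum over $\Z^n$ and rerun the argument of Theorem~\ref{MinkAlm}, with the hypothesis $\Gamma\subset\Z^n$ entering precisely where you use it, namely to obtain $\sum_{a\in\Z^n}\1_{a\in S/2+v}=\card(S/2\cap\Z^n)$ for $v\in\Gamma$. The boundary and lattice-point-counting estimates you flag are indeed the only remaining technicalities, and they are disposed of by the van Hove property as you describe.
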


The idea of the proof of this corollary is identical to that of Theorem \ref{MinkAlm}, but instead of integrating $\rho_a^R$ (see Equation~\eqref{derEq}) over $\R^n$, one sums $\rho_a^R$ over $\Z^n$. Some technicalities in the proof require $\Gamma$ to be a weakly almost periodic subset of $\Z^n$.

The case of equality in this corollary is attained even in the non trivial case where $\card(S/2\cap\Z^n)>1$, as shown by the following example.
\begin{ex}\label{OptMink}
If $k$ is an odd number, if $\Gamma$ is the lattice $k\Z\times \Z$, and if $S$ is a centrally symmetric convex set such that (see Figure~\ref{FigExMink})
\[S\cap \Gamma = \big\{(i,0)\mid i\in \{-(k-1),\cdots, k-1\}\big\} \cup \big\{\pm(i,1)\mid i\in\{ 1,\cdots, k-1\}\big\} ,\]
then $\sum_{u\in S} \rho(u) = 1$, $D(\Gamma) = 1/k$ and $\card(S/2 \cap \Z^n) = k$.
\end{ex}

\begin{figure}[t]
\begin{center}
\begin{tikzpicture}[scale=.75]
\draw (0,0) node[right]{$0$};
\foreach\i in {-1,...,1}{
\foreach\j in {-2,...,2}{
\draw[color=red!70!black] (3*\i,\j) node {$\bullet$};
}}
\foreach\i in {-3,...,3}{
\foreach\j in {-2,...,2}{
\draw[color=black] (\i,\j) node {$\cdot$};
}}
\draw[color=green!70!black,thick] (-2.2,.1) -- (-2.2,-1.2) -- (-1,-1.2) -- (2.2,-.1) -- (2.2,1.2) -- (1,1.2) -- cycle;
\draw[color=green!40!black] (1,1.1) node[above right] {$S$};
\draw[color=blue!70!black] (-1.1,.05) -- (-1.1,-.6) -- (-.5,-.6) -- (1.1,-.05) -- (1.1,.6) -- (.5,.6) -- cycle;
\draw[color=blue!40!black] (1,.5) node[right] {\small$S/2$};
\draw[color=red!60!black] (3,.3) node[right] {$\Gamma$};
\end{tikzpicture}
\caption{Example \ref{OptMink} of equality case in Corollary~\ref{CoroMinkAlm} for $k=3$.}\label{FigExMink}
\end{center}
\end{figure}
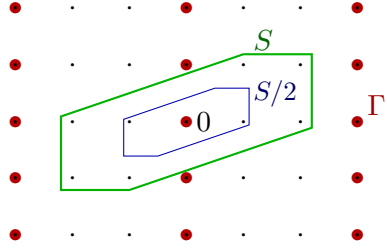

\section{Applications}
\label{sec:appli}

\subsection{Application to Diophantine approximation}
\label{sec:app_dio}

\subsubsection{A Dirichlet theorem for quasicrystals}

In this section, we develop a generalization of Dirichlet theorem for approximations of irrational numbers. We give this theorem for completeness.

\begin{theoreme}[Dirichlet]
Let $\overline\alpha = (\alpha_1,\dots,\alpha_n)$ be such that at least one of the $\alpha_i$ is irrational. Then there are infinitely many tuples of integers $(x_1,\dots,x_n,y)$ such that the highest common factor of $x_1,\dots,x_n, y$ is 1 and that
 	\[
 	\left|\frac{x_i}{y}-\alpha_i\right| \le y^{-1-1/n} \text{ for } i=1,\dots,n.
 	\]
\end{theoreme}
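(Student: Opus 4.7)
The plan is to deduce the theorem from the classical Minkowski theorem (Theorem~\ref{Minkowski}) applied in $\R^{n+1}$ to the lattice $\Z^{n+1}$ and a carefully chosen centrally symmetric convex body, following the standard pattern of the geometry of numbers. For a real parameter $Q>1$, I would introduce
\[
S_Q = \bigl\{(x_1, \ldots, x_n, y) \in \R^{n+1} : |y| \le Q \text{ and } |x_i - \alpha_i y| \le Q^{-1/n} \text{ for } i=1, \ldots, n \bigr\}.
\]
The linear change of variables $z_i = x_i - \alpha_i y$ has Jacobian $1$, so $\Leb(S_Q)=2^{n+1}$ and hence $\Leb(S_Q/2)=1=\operatorname{covol}(\Z^{n+1})$. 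A mild enlargement trick (apply Theorem~\ref{Minkowski} to $(1+\varep)S_Q$ and extract a limit point as $\varep\to 0$, using the discreteness of $\Z^{n+1}$) produces a nonzero integer point $(x_1,\ldots,x_n,y)\in S_Q$.

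I would then analyze this point. From $|x_i - \alpha_i y|\le Q^{-1/n}<1$ and $|y|\le Q$, the case $y=0$ would force each $x_i=0$, contradicting non-triviality; so $y\ne 0$, and I may assume $y>0$. Dividing the inequality by $y$ and using $y\le Q$, I obtain
\[
\left|\frac{x_i}{y} - \alpha_i\right| \le \frac{1}{y\,Q^{1/n}} \le y^{-1-1/n}.
\]
To enforce coprimality, replace the tuple by its reduction by $d=\gcd(x_1,\ldots,x_n,y)$; the new denominator $y'=y/d\le y$ still satisfies the desired bound (with $y'$ on the right), since $t\mapsto t^{-1-1/n}$ is decreasing and the quotients $x_i'/y'$ coincide with $x_i/y$.

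To produce \emph{infinitely} many such tuples I would argue by contradiction. Suppose only finitely many primitive solutions exist, listed as $(x_1^{(k)}, \ldots, x_n^{(k)}, y^{(k)})$ for $k=1,\ldots,N$. Fix an irrational coordinate $\alpha_{i_0}$ and set $\delta \doteq \min_k |x_{i_0}^{(k)} - \alpha_{i_0}\,y^{(k)}|$; this is strictly positive by irrationality, as no $x_{i_0}^{(k)} - \alpha_{i_0} y^{(k)}$ can vanish. Choosing $Q$ with $Q^{-1/n}<\delta$, the Minkowski point above satisfies $|x_{i_0}-\alpha_{i_0} y|<\delta$, a strict inequality that is only sharpened under primitive reduction (since $|x_{i_0} - \alpha_{i_0} y| = d|x_{i_0}' - \alpha_{i_0} y'|$ with $d \ge 1$); the resulting primitive tuple therefore cannot occur in the given list, contradicting maximality.

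The only genuinely delicate step is this last one: Minkowski supplies a single integer point per value of $Q$, and it is precisely the irrationality hypothesis that forces the family of primitive tuples to escape every finite set as $Q\to\infty$. Everything else — the volume computation, the change of variables, the passage from the closed body to an integer point, and the reduction to a primitive tuple — is mechanical.
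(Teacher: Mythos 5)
Your proof is correct. Note that the paper itself gives no proof of this statement: it is the classical Dirichlet simultaneous approximation theorem, quoted ``for completeness'' as motivation for Corollary~\ref{co:dirichlet}, so there is no argument in the text to compare against. What you have written is the standard geometry-of-numbers derivation, and the three points that actually require care are all handled: the equality case $\Leb(S_Q/2)=\operatorname{covol}(\Z^{n+1})$ via the $(1+\varep)$-enlargement and compactness of the closed parallelepiped; the exclusion of $y=0$ from $Q^{-1/n}<1$; and the passage to infinitely many primitive tuples, where the irrationality of $\alpha_{i_0}$ guarantees $\delta=\min_k|x_{i_0}^{(k)}-\alpha_{i_0}y^{(k)}|>0$ and the choice $Q^{-1/n}<\delta$ forces a new solution. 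The only cosmetic remark is that the reduction to a primitive tuple and the monotonicity of $t\mapsto t^{-1-1/n}$ are used exactly as you state, and the argument implicitly (and correctly) normalizes $y>0$ using the central symmetry of $S_Q$.
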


One may be interested by approximations of real numbers by tuples in sets different from $\Z^{n+1}$, for instance quasicrystals. The following result is an easy consequence of Theorem \ref{MinkAlm} which is convenient for the study of Diophantine approximations in weakly almost periodic sets.

\begin{coro}
\label{co:dirichlet}
	Let $L_1,\dots,L_n$ be $n$ linear forms on $\R^n$ such that $\det (L_1,\dots,L_n) \neq 0$. Let $A_1,\dots,A_n$ be positive real numbers and let $\Gamma$ be a weakly almost periodic set. Then
	\[
	\sum_{\substack{x\in \Z^n \\ \forall i\ |L_i(x)|\le A_i}}\!\! \rho_{\Gamma}(x) \ge D(\Gamma) A_1\dots A_n |\det (L_1,\dots,L_n)|^{-1}.
	\]
\end{coro}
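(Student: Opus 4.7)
The plan is to recognize the corollary as a direct application of Theorem \ref{MinkAlm} to an appropriately chosen centrally symmetric convex body. Specifically, I would set
\[
S = \{x \in \R^n \mid |L_i(x)| \le A_i \text{ for all } i = 1, \ldots, n\},
\]
which is the preimage of the box $\prod_{i=1}^n [-A_i, A_i]$ under the linear map $L = (L_1, \ldots, L_n) : \R^n \to \R^n$. First I would check that $S$ qualifies as a centrally symmetric convex body: central symmetry comes from the linearity of the $L_i$ (so $|L_i(-x)| = |L_i(x)|$), convexity from $S$ being the intersection of the slabs $\{|L_i(x)|\le A_i\}$, and boundedness from the nondegeneracy hypothesis $\det(L_1, \ldots, L_n) \neq 0$, which makes $L$ a linear isomorphism and hence $S = L^{-1}(\prod_i[-A_i,A_i])$ compact.

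Next I would compute $\Leb(S/2)$. Since $S/2 = \{x \mid |L_i(x)| \le A_i/2\}$ is the $L$-preimage of the box $\prod_{i=1}^n [-A_i/2, A_i/2]$ of Lebesgue measure $A_1 \cdots A_n$, the change-of-variables formula for the invertible linear map $L$ yields
\[
\Leb(S/2) = \frac{A_1 \cdots A_n}{|\det(L_1, \ldots, L_n)|}.
\]

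Finally I would invoke Theorem \ref{MinkAlm} applied to the Meyer set $\Gamma$ (every weakly almost periodic set considered from Section \ref{sec:wapsets} onwards is Meyer by the running assumption) and the body $S$. Because $\rho_\Gamma$ is defined on $\Z^n$ by Definition \ref{DefDiff}, the sum $\sum_{u \in S} \rho_\Gamma(u)$ appearing in Theorem \ref{MinkAlm} coincides with the sum over $\{x \in \Z^n \mid |L_i(x)| \le A_i\}$ written in the corollary. Combining this with Proposition \ref{limitexist}, which identifies $D^+(\Gamma)$ with $D(\Gamma)$ for weakly almost periodic sets, gives the desired inequality at once.

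There is essentially no serious obstacle here: the whole argument reduces to the volume computation above followed by a direct citation of the main theorem. The only point requiring a moment of care is the interpretation of the summation index, which is settled by the convention that $\rho_\Gamma$ is supported on $\Z^n$.
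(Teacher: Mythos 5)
Your proof is correct and is exactly the argument the paper intends: the paper's own proof consists solely of the line ``Define $S = \{x\in \R^n : |L_i(x)| \le A_i \}$'' and leaves the verification that $S$ is a centrally symmetric convex body, the volume computation $\Leb(S/2) = A_1\cdots A_n\,|\det(L_1,\dots,L_n)|^{-1}$, and the appeal to Theorem~\ref{MinkAlm} implicit. You have simply filled in those routine details, so there is nothing to correct.
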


\begin{proof}
	Define $S = \{x\in \R^n : |L_i(x)| \le A_i \}$ .
\end{proof}

Let $Q>1$ be a parameter that will be specified later and define for any $i \in \{1,\dots,n\}$ the linear form $L_i:\R^{n+1} \rightarrow \R$ by
\[
L_i(x_1,\dots,x_n,y)=x_i-\alpha_i y
\]
and $L_{n+1}(x_1,\dots,x_n,y) = y$. We have immediately that $\det (L_1,\dots,L_n)=1$. We apply Corollary \ref{co:dirichlet} with $A_1=\dots=A_n=Q^{-1/n}$ and $A_{n+1}=2 Q/D(\Gamma)$. If all the inequalities $|L_i(x_1,\dots,x_n,y)| \le A_i$ are verified, then for each $i\le n$,
\[
\left|\alpha_i- \frac{x_i}{y}\right| \le \frac{2^{1/n}}{D(\Gamma)^{1/n}}|y|^{-1-1/n}.
\]
We obtain
\[
\sum_{\substack{(x_1,\dots,x_n,y)\in \Z^{n+1}\setminus \{0\} \\ \forall i,\ |L_i(x)|\le A_i}}\!\! \rho_{\Gamma}(x) \ge 1.
\]
In particular, there exists a point $u$ in $\Z^{n+1}$ which is a difference of two different points $v=(x_1^v,\dots,x_n^v,y^v)$ and $w=(x_1^w,\dots,x_n^w,y^w)$ in the weakly almost periodic set $\Gamma$ and such that $\forall i$, $|L_i(v-w)|\le A_i$. Thus, for any $i\le n$, the slope of the line defined by the two points $(x_i^v,y^v)$ and $(x_i^w,y^w)$ approximates the slope given by $\overline\alpha$. More precisely,
\[
\left|\alpha_i- \frac{x_i^v-x_i^w}{y^v-y^w}\right| \le \frac{2^{1/n}}{D(\Gamma)^{1/n}}|y^v-y^w|^{-1-1/n} \le \frac{D(\Gamma)2^{1/n}}{(4Q)^{1+1/n}}.
\]
Remark that the approximation quality highly depends on the density of the considered set $\Gamma$. Thus, we will find at least one direction in $\Gamma$ close to $\overline\alpha$ (close with a factor comparable to $Q^{-1-1/n}$) in one ball of size comparable with $Q$. This can be seen as a non-asymptotic counterpart of the strong results of \cite{MR3384490}.

\begin{figure}[t]
\begin{center}
\includegraphics[width=.6\linewidth]{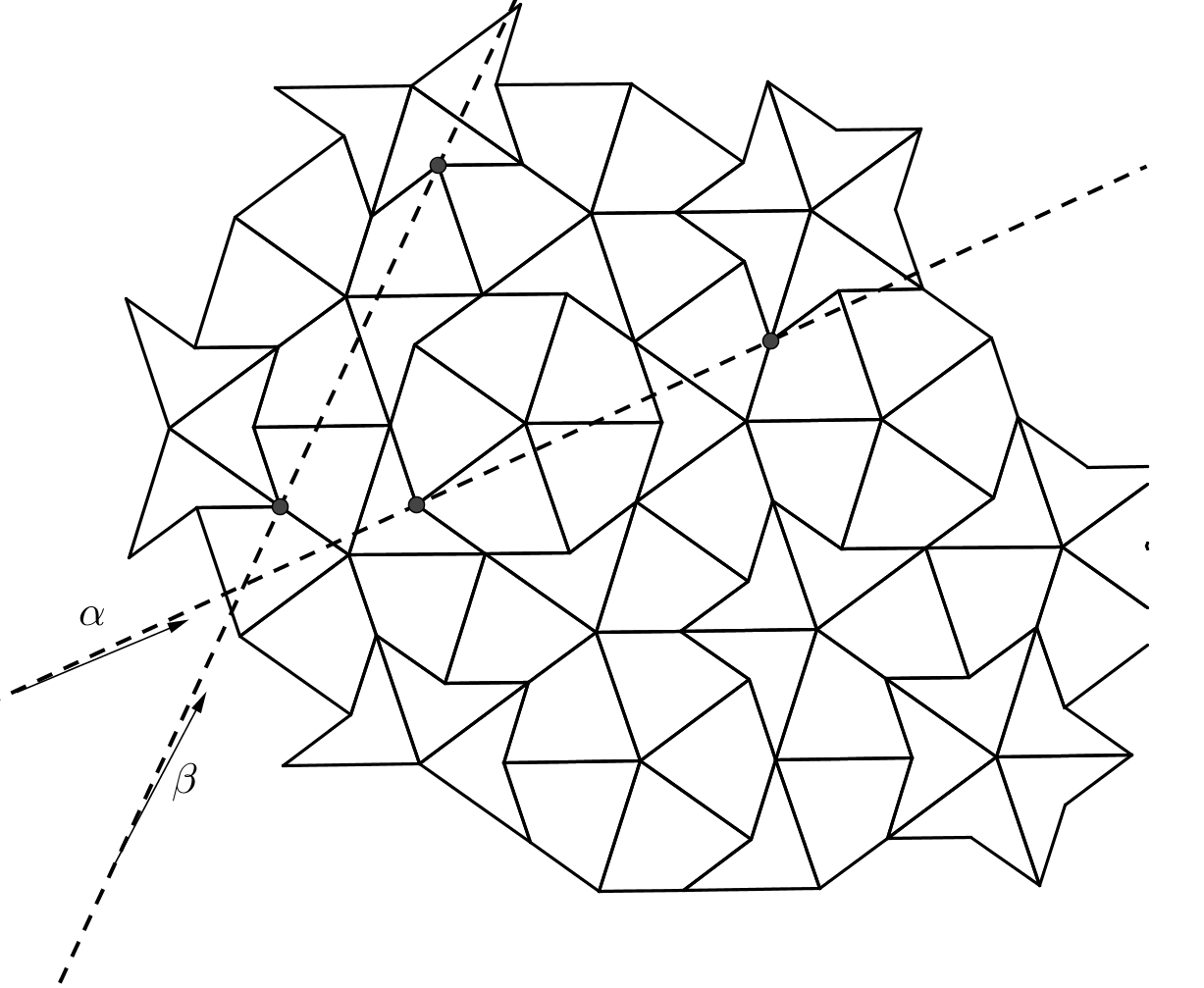}
\caption{For a fixed chosen direction $\alpha$, one can find two points in a Penrose tilling defining a line whose slope is close to $\alpha$. Two different chosen directions are shown. Penrose tilings are model sets, thus are weakly almost periodic (see \cite{MR609465}).}
\end{center}	
\end{figure}

\subsubsection{Frequency of differences and approximations}

Theorem~\ref{MinkAlm} gives informations about the simultaneous approximations of a set of numbers for an arbitrary norm: given a norm $N$ on $\R^n$ and a $n$-tuple of $\Q$-linearly independent numbers $\overline\alpha = (\alpha_1,\cdots,\alpha_n)$, we look at the set
\begin{align*}
E_{\overline\alpha}^\varep & = \big\{y\in\Z\mid \exists x\in\Z^n : N(y\overline\alpha - x)<\varep\big\}\\
    & =  \Big\{y\in\Z\mid \exists x\in\Z^n : N\big(\overline\alpha - \frac{x}{y}\big)<\frac{\varep}{y}\Big\}.
\end{align*}

This set is a model set modelled on the lattice spanned by the matrix
\[\begin{pmatrix}
-1  &  &  &  \alpha_1\\
& \ddots & & \vdots\\
& & -1    & \alpha_n\\
& & & 1
\end{pmatrix}\]
and on the window $W=\{x\in\R^n\mid N(x)<\varep\}$. It is a weakly almost periodic set with density equal to $\operatorname{Vol}(W)$ (as long as $W$ does not intersect any integer translate of itself). Then Theorem~\ref{MinkAlm} asserts that for every $d>0$,
\[\sum_{\substack{u\in\Z \\ |u|\le d}} \rho_{E_{\overline\alpha}}^\varep(u) \ge d\operatorname{Vol}(W).\]
In other words, given $v\in E_{\overline\alpha}^\varep$, the average number of points $v'\in E_{\overline\alpha}^\varep$ such that $|v-v'|\le d$ is bigger than $d\operatorname{Vol}(W)$.

\subsection{Application to the dynamics of the discretizations of linear maps}
\label{sec:app_map}

Here, we recall a theorem of \cite{Gui15b} and sketch its proof, which crucially uses Minkowski theorem for weakly 	almost periodic sets.

We take a Euclidean projection\footnote{That is, $\pi(x)$ is (one of the) point(s) of $\Z^n$ the closest from $x$ for the Euclidean norm.} $\pi$ of $\R^n$ onto $\Z^n$; given $A\in GL_n(\R)$, the \emph{discretization} of $A$ is the map $\widehat A = \pi\circ A : \Z^n\to\Z^n$. This is maybe the simplest way to define a discrete analogue of a linear map. We want to study the action of such discretizations on the set $\Z^n$; in particular if these maps are far from being injective, then when applied to numerical images, discretizations will induce a loss of quality in the resulting images.

Thus, we study the \emph{rate of injectivity} of discretizations of linear maps: given a sequence $(A_k)_{k\in\N}$ of linear maps, the \emph{rate of injectivity in time $k$} of this sequence is the quantity
\[\tau^k(A_1,\cdots,A_k) = \lim_{R\to +\infty} \frac{\card \big((\widehat{A_k}\circ\cdots\circ\widehat{A_1}) (B_R\cap\Z^n)\big)}{\card (B_R\cap\Z^n)}\in]0,1].\]
To prove that the limit of this definition is well defined, we show that 
\[\limsup_{R\to +\infty} \frac{\card \big((\widehat{A_k}\circ\cdots\circ\widehat{A_1}) (B_R\cap\Z^n)\big)}{\card (B_R\cap\Z^n)} = |\det(A_1\cdots A_k)| D^+\left( (\widehat{A_k}\circ\cdots\circ\widehat{A_1}) (\Z^n)\right)\]
and use the fact that the set $(\widehat{A_k}\circ\cdots\circ\widehat{A_1}) (\Z^n)$ is weakly almost periodic. In particular, when all the matrices are of determinant $\pm 1$, we have
\[\tau^k(A_1,\cdots,A_k) = D^+\left( (\widehat{A_k}\circ\cdots\circ\widehat{A_1}) (\Z^n)\right)\]

Then, Theorem~\ref{MinkAlm} applies to prove next result.

\begin{theoreme}\label{AnswerConjIsom}
Let $(P_k)_{k\ge 1}$ be a generic\footnote{\label{foot2}A property concerning elements of a topological set $X$ is called \emph{generic} if satisfied on at least a countable intersection of open and dense sets. In particular, Baire theorem implies that if this space is complete (as here), then this property is true on a dense subset of $X$.} sequence of matrices\footnote{The set of sequences of matrices is endowed with the norm $\|(P_k)_{k\ge 1}\| = \sup_{k\ge 1} \|P_k\|$, making it a complete space (see Note~\ref{foot2}).} of $O_n(\R)$. Then
\[\tau^k\big((P_k)_{k\ge 1}\big) \underset{k\to+\infty}{\longrightarrow} 0.\]
\end{theoreme}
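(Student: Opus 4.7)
The plan is a Baire category argument. Since the sequence $k\mapsto \tau^k(P_1,\dots,P_k)$ is non-increasing (composing with one more discretization can only collapse points), it suffices to prove that for each fixed $\varepsilon>0$, the set
\[U_\varepsilon = \big\{(P_k)_{k\ge 1} : \exists k\ge 1,\ \tau^k(P_1,\dots,P_k)<\varepsilon\big\}\]
contains an open and dense subset of the complete metric space of sequences. The intersection $\bigcap_{n\ge 1} U_{1/n}$ will then be generic, and on it $\tau^k\to 0$.

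Openness (on a dense set) should come from an upper semi-continuity property of the map $(P_1,\dots,P_k)\mapsto D^+(\widehat{P_k}\circ\cdots\circ\widehat{P_1}(\Z^n))$: if all discretization collisions occurring among a large ball's worth of points of $\Z^n$ are robust (generic matrices place no point exactly on a rounding boundary), then any small perturbation of the first $k$ matrices preserves those collisions, so the density cannot jump upwards. The condition $\tau^k<\varepsilon$ therefore persists under small perturbations of $P_1,\dots,P_k$ and arbitrary change of the tail, which is open in the sup-norm topology on sequences.

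The heart of the matter is density. Fix any sequence $(P_k)_{k\ge 1}$ and any $\varepsilon>0$; we must exhibit an arbitrarily small perturbation of $(P_k)$ lying in $U_\varepsilon$. Set $\Gamma_k=\widehat{P_k}\circ\cdots\circ\widehat{P_1}(\Z^n)$, which is Meyer and weakly almost periodic, with $\tau^k=D(\Gamma_k)$ since $|\det P_j|=1$. Assume $\tau^k=D>\varepsilon$, otherwise we are done. The idea is to show that by suitably perturbing $P_{k+1}$, we can force a strictly multiplicative drop $D(\Gamma_{k+1})\le (1-\eta(D))\,D$, with $\eta(D)>0$ depending only on $D$. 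Applied iteratively, this brings the density below $\varepsilon$ in finitely many steps.

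To produce the drop, apply Theorem \ref{MinkAlm} to the Meyer set $\Gamma_k$ and the ball $S=B(0,R)$ with $R$ chosen large so that $D\cdot\Leb(S/2)$ is large:
\[\sum_{u\in S\cap\Z^n}\rho_{\Gamma_k}(u)\ \ge\ D\,\Leb(S/2).\]
Thus $\Gamma_k$ contains a positive-frequency population of pairs $(x,x+u)$ with $u$ in a ball of radius $R$. Two such points are identified by $\widehat{P_{k+1}}$ exactly when $\pi(P_{k+1}x)=\pi(P_{k+1}(x+u))$, a condition on the fractional parts of $P_{k+1}x$ that is satisfied with a positive frequency over $x$ as soon as $P_{k+1}u$ lies inside the unit cube centred at the origin. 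Since the differences $u$ with $\rho_{\Gamma_k}(u)>0$ are uniformly discrete in a ball of volume $\sim 1/D$, one can select an orthogonal $P_{k+1}$ arbitrarily close to the given one that sends a definite proportion of these differences into this cube. The resulting identifications produce the claimed drop $\eta(D)>0$ in density, because the weakly almost periodic structure (Proposition \ref{limitexist}, Lemma \ref{Moule} and Proposition \ref{IntRho}) guarantees that the pointwise frequencies $\rho_{\Gamma_k}$ translate into a genuine loss of global density.

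The main obstacle is this last quantitative step: turning the Minkowski lower bound on $\sum_u\rho_{\Gamma_k}(u)$ into a uniform-in-$D$ multiplicative contraction of the density after one generic perturbation. It requires a careful accounting of how many of the weighted differences $u\in S\cap\Z^n$ can simultaneously be driven into the unit cube by a single orthogonal $P_{k+1}$, and of how pairwise collisions at the level of $\Gamma_k$ translate into an actual density decrease of the image set $\Gamma_{k+1}=\widehat{P_{k+1}}(\Gamma_k)$, which again should be handled through the mean formula $\mathcal M(\rho_{\Gamma_k})=D(\Gamma_k)$ of Proposition \ref{IntRho}.
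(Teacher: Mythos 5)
Your overall framework---Baire category via the sets $U_\varepsilon$, monotonicity of $k\mapsto\tau^k$, and the use of Theorem~\ref{MinkAlm} on a ball $S=B(0,r)$ with $\Leb(S/2)\sim 1/D(\Gamma_k)$ to locate a nonzero difference of non-negligible frequency---matches the paper's strategy. But the step you yourself flag as ``the main obstacle'' is where the argument genuinely breaks, and the mechanism you propose to close it cannot work. You want to perturb the single matrix $P_{k+1}$ so that it ``sends a definite proportion of these differences into the unit cube,'' thereby collapsing the corresponding pairs under $\pi\circ P_{k+1}$ in one step. However $P_{k+1}$ is orthogonal, hence norm-preserving: a difference $u$ with $\|u\|_2>\sqrt n$ can never be mapped into the unit rounding cell, no matter how $P_{k+1}$ is chosen within $O_n(\R)$. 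The differences supplied by Minkowski's theorem live in a ball of radius $r\sim D(\Gamma_k)^{-1/2}$, which is large precisely in the regime of interest (small density); and applying Theorem~\ref{MinkAlm} with a ball of radius $O(\sqrt n)$ only gives $\sum_{u}\rho_{\Gamma_k}(u)\ge D\,\Leb(S/2)$, a bound already absorbed by the term $\rho_{\Gamma_k}(0)=1$ when $D$ is small, so it forces no \emph{nonzero} short difference. Hence no single perturbation of $P_{k+1}$ yields the uniform multiplicative drop $\eta(D)$ you assert.

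The paper's route around this obstruction is an iterative lemma (admitted there, proved in the reference): for a suitable arbitrarily small perturbation $Q$ of $P$, either the density drops by $\delta\,\rho_{\Gamma_k}(v_0)$, or there is a new difference $v_1\in\Z^n\setminus\{0\}$ with $\|v_1\|_2<\|v_0\|_2$ and $\rho_{\widehat Q(\Gamma_k)}(v_1)\ge\delta\,\rho_{\Gamma_k}(v_0)$. Iterating transfers a geometrically decaying but positive fraction of the initial frequency onto shorter and shorter integer vectors; after at most $m_0\le\|v_0\|_2^2$ steps the surviving difference is short enough that collapse is unavoidable, giving $D(\Gamma_{k+m_0})\le D(\Gamma_k)\big(1-\delta^{m_0}\rho_{\Gamma_k}(v_0)\big)$. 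So the density drop is obtained only after roughly $\|v_0\|_2^2\sim 1/D$ perturbed steps, not one, and the contraction factor degenerates like $\delta^{O(1/D)}$ rather than being uniform in the sense you hope; this is still enough to conclude $\tau^k\to 0$, but it requires the multi-step norm-reduction bookkeeping that your one-shot argument is missing.
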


Thus, for a generic sequence of angles, the application of successive discretizations of rotations of these angles to a numerical image will induce an arbitrarily large loss of quality of this image (see Figure~\ref{PoincareRot}).

\begin{figure}[t]
\begin{center}
\begin{minipage}[c]{.25\linewidth}
	\includegraphics[width=\linewidth]{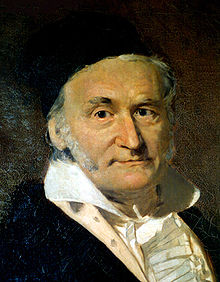}
\end{minipage}\hspace{50pt}
\begin{minipage}[c]{.25\linewidth}
	\includegraphics[width=\linewidth]{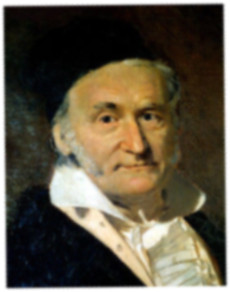}
\end{minipage}
\caption[40 successive rotations of an image]{Original image (left) of size $220\times 282$ and 10 successive random rotations of this image (right), obtained with the software \emph{Gimp} (linear interpolation algorithm).}\label{PoincareRot}
\end{center}	
\end{figure}

Let us sketch the proof of Theorem~\ref{AnswerConjIsom}. The idea is to study the set of differences of the sets
\[\Gamma_k = (\widehat{P_k}\circ\cdots\circ\widehat{P_1}) (\Z^n).\]
In particular, by analysing the action of the discretization of a generic map on the frequency of differences, one can prove the following lemma (we will admit its proof).

\begin{lemme}
For every $k$, for every isometry $P\in O_n(\R)$ and every $\varep>0$, there exists $\delta>0$ and a matrix $Q\in O_n(\R)$ such that $d(P,Q)<\varep$ satisfying: for every $v_0\in\Z^n$, 
\begin{enumerate}[(i)]
\item either there exists $v_1\in\Z^n\setminus\{0\}$ such that $\|v_1\|_2<\|v_0\|_2$ and that
\[\rho_{\widehat Q(\Gamma_k)}(v_1)\ge \delta \rho_{\Gamma_k}(v_0);\]
\item or
\[D(\widehat Q(\Gamma_k))\le D(\Gamma)\big(1-\delta \rho_{\Gamma_k}(v_0)\big).\]
\end{enumerate}
\end{lemme}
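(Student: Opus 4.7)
The plan is to analyze the action of $\widehat Q$ on the pairs $(x,y)\in \Gamma_k^2$ with prescribed difference $y-x=v_0$; such pairs have density $\rho_{\Gamma_k}(v_0)\cdot D(\Gamma_k)$. The key observation is that for each such pair the integer vector $w=\widehat Q(y)-\widehat Q(x)$ lies within $\|\cdot\|_\infty$-distance $1$ of $Qv_0$, hence in one of at most $3^n$ positions. By a pigeonhole argument on the finitely many buckets, at least a fraction $3^{-n}$ of the pairs share a common image difference $w_0=w_0(v_0)$, yielding
\[
\rho_{\widehat Q(\Gamma_k)}(w_0)\,D(\widehat Q(\Gamma_k)) \;\ge\; 3^{-n}\,\rho_{\Gamma_k}(v_0)\,D(\Gamma_k).
\]

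First I would choose $Q$ close to $P$ so that $Q\cdot\Z^n$ satisfies a uniform genericity condition: for every $v_0\in\Z^n$ the fractional part of each coordinate of $Qv_0$ stays bounded away from $\pm 1/2$ by some fixed $\eta>0$. This uses that $\Delta\Gamma_k$ is uniformly discrete (by the Meyer property of $\Gamma_k$, and the fact that discretizations of isometries preserve the Meyer property) together with a Baire-type construction over the countable bad set of hyperplanes $\{M\in O_n(\R)\mid (Mv)_i\in \frac12+\Z\}$. The separation $\eta$ controls which buckets $w$ actually occur, and for generic $Q$ only the nearest integer to $Qv_0$ survives.

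Next I would split according to the nature of $w_0$. If $w_0=0$, then a proportion $3^{-n}\rho_{\Gamma_k}(v_0)$ of pairs of $\Gamma_k$ collapse under $\widehat Q$, and a counting argument directly yields $D(\widehat Q(\Gamma_k))\le D(\Gamma_k)\bigl(1-\delta\rho_{\Gamma_k}(v_0)\bigr)$, which is option~(ii). If $w_0\neq 0$ and $\|w_0\|_2<\|v_0\|_2$, then $v_1:=w_0$ witnesses option~(i). The delicate case is $w_0\neq 0$ with $\|w_0\|_2\ge\|v_0\|_2$; here I would apply Theorem~\ref{MinkAlm} to $\widehat Q(\Gamma_k)$ with $S$ a centrally symmetric convex body of diameter slightly smaller than $\|v_0\|_2$. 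The Minkowski-type lower bound on $\sum_{u\in S}\rho_{\widehat Q(\Gamma_k)}(u)$ then forces either a shorter difference $v_1\in S\cap\Z^n$ with $\rho_{\widehat Q(\Gamma_k)}(v_1)$ large enough to produce option~(i), or else $D(\widehat Q(\Gamma_k))\Leb(S/2)$ to be small, which combined with the pigeonhole lower bound above forces option~(ii).

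The hardest step will be the simultaneous genericity of $Q$ over all $v_0\in\Z^n$ with a uniform constant $\delta$. A naive Baire argument yields a single bad hyperplane avoidance per $v_0$, but not a quantitative uniform $\eta$. I expect to handle this using the weakly almost periodic structure of $\Gamma_k$ from Section~\ref{sec:wapsets}: only differences $v_0$ with $\rho_{\Gamma_k}(v_0)$ bounded below by a prescribed threshold contribute meaningfully to options (i) or (ii), and by the Meyer property such vectors lie in a Delone set on which a finite truncation reduces the countable-intersection problem to a finite-dimensional perturbation, producing the required uniform $\eta$, hence the uniform $\delta$.
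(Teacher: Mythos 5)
Note first that the paper does not prove this lemma: it is quoted from \cite{Gui15b} and the authors explicitly ``admit its proof'', so there is no in-paper argument to compare yours against. Judged on its own, your proposal has a genuine gap at its central step. The pigeonhole over the $\le 3^n$ buckets near $Qv_0$ produces a winning difference $w_0$ over which you have no control, and the lemma is only proved if $w_0$ happens to be $0$ or strictly shorter than $v_0$. Since $\|Qv_0\|_2=\|v_0\|_2$ exactly, the integer vectors near $Qv_0$ typically include vectors of norm $\ge\|v_0\|_2$ (e.g.\ $v_0=(1,0)$, $Qv_0=(0.707,0.707)$, nearest integer point $(1,1)$), and pigeonhole may well select one of those. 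Your fallback for this case via Theorem~\ref{MinkAlm} does not work: the sum $\sum_{u\in S}\rho_{\widehat Q(\Gamma_k)}(u)$ always contains the term $u=0$ contributing $1$, so for $S$ of diameter smaller than $\|v_0\|_2$ (which is forced, and which may be as small as $1$ or $2$) the bound $D\cdot\Leb(S/2)$ is below $1$ and yields no nonzero difference at all; and even when it yields something, the conclusion ``$D(\widehat Q(\Gamma_k))\Leb(S/2)$ is small'' is not option~(ii), which requires the specific multiplicative loss $1-\delta\rho_{\Gamma_k}(v_0)$. The case $\|v_0\|_2=1$ makes the failure stark: option~(i) is then vacuous, so one \emph{must} prove option~(ii), i.e.\ that a definite proportion $\delta\rho_{\Gamma_k}(v_0)$ of points genuinely collide under $\widehat Q$; no pigeonhole over image differences can force the $w_0=0$ bucket to win. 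This is exactly where the perturbation $Q$ of $P$ must be used \emph{constructively} -- chosen so that the rounding of $Qv_0$ is driven toward $0$ or toward a shorter vector -- rather than merely to avoid half-integer hyperplanes as in your genericity step.

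Two further points would need repair even if the bucket issue were fixed. First, your displayed inequality transfers the density of the set $\{x\in\Gamma_k: x+v_0\in\Gamma_k,\ w(x)=w_0\}$ to the density of its image under $\widehat Q$, but $\widehat Q$ is not injective, so the image can have much smaller density; you would need to split into ``$\widehat Q$ collapses a proportion $\ge c\,\rho_{\Gamma_k}(v_0)$ of points'' (giving option~(ii) directly) versus ``$\widehat Q$ is almost injective'' (where the transfer is valid up to a controlled loss). Second, the uniformity of a single $\delta$ and a single $Q$ over \emph{all} $v_0\in\Z^n$ is not reducible to finitely many $v_0$ as you suggest: the set $\{v_0:\rho_{\Gamma_k}(v_0)\ge\varep\}$ is a subset of the Delone set $\Delta\Gamma_k$ and is in general infinite (for $\Gamma_k=\Z^n$ one has $\rho\equiv 1$), so the claimed finite truncation does not exist, and the uniform $\eta$ must come from a different mechanism (in the cited reference it comes from equidistribution of $Q\Z^n$ modulo $\Z^n$ for generic $Q$, which gives quantitative control simultaneously for all $v_0$).
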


In other words, in case (i), making a $\varep$-small perturbation of $P$ if necessary, if a difference $v_0$ appears with a positive frequency in $\Gamma$, then some difference $v_1\neq 0$ will also appear with positive frequency, with the fundamental property that $\|v_1\|_2<\|v_0\|_2$. In case (ii), the rate of injectivity strictly decreases between times $k$ and $k+1$.

We then iterate this process, as long as we are in the first case of the lemma: starting from a difference $v_0$ appearing with a frequency $\rho_0$ in $\Gamma_k$, one can build a sequence of differences $(v_m)$ of vectors of $\Z^n$ with decreasing norm such that for every $m$ we have $\rho_{\Gamma_{k+m}}(v_m)\ge\delta^m\rho_0$. Ultimately, this sequence of points $(v_m)$ will go to 0 (as it is a sequence of integral points with decreasing norms). Thus, there will exist a rank $m_0\le \|v_0\|_2^2$ such that we will be in case (ii) of the lemma (which is the only case occurring when $\|v_0\|_2 = 1$). Then, we will get 
\[D(\Gamma_{k+m_0})\le D(\Gamma_k)\big(1-\delta^{m_0} \rho_{\Gamma}(v_0)\big).\]

It remains to initialize this construction, that is, to find a difference $v_0\in\Z^n$ ``not too far from 0'' and such that $\rho_{\Gamma}(v_0)$ is large enough. This step simply consists in the application of Theorem~\ref{MinkAlm}: applying it to $S=B(0,r)$ with $r^2=8/(\pi D(\Gamma))$, one gets
\[\sum_{u\in B(0,r)} \rho_\Gamma(u) \ge 2,\]
thus
\[\sum_{u\in B(0,r)\setminus\{0\}} \rho_\Gamma(u) \ge 1.\]
As the support of $\rho_\Gamma$ is included in $\Z^n$, and as $\card (B(0,r)\cap\Z^n)\le \pi(r+1)^2$, this implies that there exists $u_0\in B(0,r)\cap (\Z^n\setminus \{0\})$ such that
\[\rho_\Gamma(u_0) \ge \frac{1}{\pi(r+1)^2},\]
which gives for $r\ge 3$
\[\rho_\Gamma(u_0) \ge \frac{D(\Gamma)}{16}.\]

This allows to estimate the ``loss of injectivity'' $D(\Gamma_{k}) - D(\Gamma_{k+m_0})$ that occurs between times $k$ and $k+m_0$. Theorem~\ref{AnswerConjIsom} is obtained by applying this reasoning many times.

\subsubsection*{Acknowledgements.}

The first author is founded by an IMPA/CAPES grant. The second author is funded by the French Agence Nationale de la Recherche (ANR), under grant ANR-13-BS01-0005 (project SPADRO).

\bibliographystyle{amsalpha}
\bibliography{Biblio}

\providecommand{\MR}[1]{}
\providecommand{\bysame}{\leavevmode\hbox to3em{\hrulefill}\thinspace}
\providecommand{\MR}{\relax\ifhmode\unskip\space\fi MR }
\providecommand{\MRhref}[2]{%
  \href{http://www.ams.org/mathscinet-getitem?mr=#1}{#2}
}
\providecommand{\href}[2]{#2}
\begin{thebibliography}{Gui15b}

\bibitem[Ber10]{MR2724440}
Marcel Berger, \emph{Geometry revealed}, Springer, Heidelberg, 2010, A Jacob's
  ladder to modern higher geometry, Translated from the French by Lester
  Senechal. \MR{2724440}

\bibitem[Cas97]{MR1434478}
J.~W.~S. Cassels, \emph{An introduction to the geometry of numbers}, Classics
  in Mathematics, Springer-Verlag, Berlin, 1997, Corrected reprint of the 1971
  edition. \MR{1434478}

\bibitem[dB81]{MR609465}
Nicolaas~Govert de~Bruijn, \emph{Algebraic theory of {P}enrose's nonperiodic
  tilings of the plane. {I}, {II}}, Nederl. Akad. Wetensch. Indag. Math.
  \textbf{43} (1981), no.~1, 39--52, 53--66. \MR{609465 (82e:05055)}

\bibitem[GL87]{MR893813}
Pascale Gruber and Cornelis~Gerrit Lekkerkerker, \emph{Geometry of numbers},
  second ed., North-Holland Mathematical Library, vol.~37, North-Holland
  Publishing Co., Amsterdam, 1987. \MR{893813 (88j:11034)}

\bibitem[Gui15a]{Gui15b}
Pierre-Antoine Guih{\'e}neuf, \emph{Discretizations of isometries},
  arXiv:1510.00722, 2015.

\bibitem[Gui15b]{Gui15d}
\bysame, \emph{Model sets, almost periodic patterns, uniform density and linear
  maps}, arXiv:1512.00650, 2015.

\bibitem[Lag96]{MR1400744}
Jeffrey~C. Lagarias, \emph{Meyer's concept of quasicrystal and quasiregular
  sets}, Comm. Math. Phys. \textbf{179} (1996), no.~2, 365--376. \MR{1400744
  (97g:52049)}

\bibitem[Mey72]{MR0485769}
Yves Meyer, \emph{Algebraic numbers and harmonic analysis}, North-Holland
  Publishing Co., Amsterdam-London; American Elsevier Publishing Co., Inc., New
  York, 1972, North-Holland Mathematical Library, Vol. 2. \MR{0485769 (58
  \#5579)}

\bibitem[Mey12]{MR2876415}
\bysame, \emph{Quasicrystals, almost periodic patterns, mean-periodic functions
  and irregular sampling}, Afr. Diaspora J. Math. \textbf{13} (2012), no.~1,
  1--45.

\bibitem[Min10]{minkowski1910geometrie}
Hermann Minkowski, \emph{Geometrie der {Z}ahlen}, no. vol.~1, B.G. Teubner,
  1910.

\bibitem[Moo00]{Moody25}
Robert Moody, \emph{Model sets: A survey}, From Quasicrystals to More Complex
  Systems, vol.~13, Centre de Physique des Houches, 2000, pp.~145--166.

\bibitem[Moo02]{MR1884143}
\bysame, \emph{Uniform distribution in model sets}, Canad. Math. Bull.
  \textbf{45} (2002), no.~1, 123--130. \MR{1884143 (2003d:11120)}

\bibitem[MS15]{MR3384490}
Jens Marklof and Andreas Str{\"o}mbergsson, \emph{Visibility and directions in
  quasicrystals}, Int. Math. Res. Not. IMRN (2015), no.~15, 6588--6617.
  \MR{3384490}

\bibitem[Sie89]{MR1020761}
Carl~Ludwig Siegel, \emph{Lectures on the geometry of numbers},
  Springer-Verlag, Berlin, 1989, Notes by B. Friedman, Rewritten by Komaravolu
  Chandrasekharan with the assistance of Rudolf Suter, With a preface by
  Chandrasekharan. \MR{1020761}

\end{thebibliography}

\end{document}